\documentclass[11pt]{amsart}

\usepackage[T1]{fontenc}
\usepackage{lmodern}
\usepackage{amsmath,amssymb,amsthm,mathtools}
\usepackage{microtype}
\usepackage[hidelinks]{hyperref}

\hypersetup{
  pdftitle={Prescribed extension spectra of mock automorphisms over finite fields},
  pdfauthor={Stefan Barańczuk, Tomasz Ślusarski},
  pdfsubject={Extension spectra, mock automorphisms, and Keller maps over finite fields},
  pdfkeywords={finite fields, Keller maps, mock automorphisms, exceptional polynomials, permutation polynomials}
}

\newtheorem{theorem}{Theorem}[section]
\newtheorem{proposition}[theorem]{Proposition}
\newtheorem{lemma}[theorem]{Lemma}
\newtheorem{corollary}[theorem]{Corollary}
\theoremstyle{definition}
\newtheorem{definition}[theorem]{Definition}
\newtheorem{question}[theorem]{Question}
\theoremstyle{remark}
\newtheorem{remark}[theorem]{Remark}
\newtheorem{example}[theorem]{Example}

\newcommand{\F}{\mathbf{F}}
\newcommand{\A}{\mathbb{A}}
\newcommand{\N}{\mathbf{N}}
\newcommand{\Jac}{\operatorname{Jac}}
\newcommand{\Tr}{\operatorname{Tr}}
\newcommand{\Norm}{\operatorname{N}}
\newcommand{\id}{\operatorname{id}}
\newcommand{\Specper}{\mathcal{S}}
\newcommand{\lcm}{\operatorname{lcm}}

\title[Prescribed extension spectra of mock automorphisms]
{Prescribed extension spectra of mock automorphisms over finite fields}

\author{Stefan Barańczuk, Tomasz Ślusarski}
\address{Adam Mickiewicz University, Poznań, Poland}
\email{stefan.baranczuk@amu.edu.pl} \email{tomasz.slusarski@amu.edu.pl}

\subjclass[2020]{Primary 11T06, 14R15; Secondary 12E20, 14E20}
\keywords{finite field, Keller map, mock automorphism, exceptional polynomial,
permutation polynomial, Frobenius perturbation, extension spectrum}

\begin{document}

\begin{abstract}
We classify the finite-extension permutation spectra occurring in an explicit family of mock automorphisms over finite fields. Every finite nonempty divisor-closed subset of $\N_{>0}$ occurs; in particular, this disproves Maubach and Willems’ conjecture that every mock automorphism permutes infinitely many finite extensions.

Every finite-spectrum map satisfying
\[
 \Jac(F)=I_n,\qquad F|_{\F_q^n}=\id
\]
has total degree at least $p(q+1)$. For every $n\geq2$, this bound is
attained by a map with spectrum $\{1\}$. In dimension one, the same bound
and exact spectrum $\{1\}$ are attained over every odd field and over
$\F_2$; for even $q>2$ we give a nonexceptional construction with an
effective finite bound for its spectrum. If $\delta_p$ denotes the least degree of a nonlinear reduced permutation polynomial over $\mathbb F_p$, then the least degree $\mu_p$ of a nonexceptional one-variable mock automorphism satisfies
\[
\mu_2=6,\qquad \mu_3=12,\qquad \mu_p=p\delta_p\quad(p\ge5).
\] We also record a cofinite-spectrum criterion and isolate
the surviving prime-to-$p$ question. Every nontrivial counterexample
constructed here has geometric generic degree divisible by $p$.
\end{abstract}

\maketitle

\section{Introduction}

Let $k$ be a field and let
\[
 F=(F_1,\ldots,F_n)\in k[X_1,\ldots,X_n]^n
\]
be a polynomial endomorphism of affine $n$-space. It is a \emph{Keller
map} if
\[
 \det\Jac(F)\in k^\times.
\]

Maubach and Willems introduced the following finite-field notion
\cite[Section~2]{MaubachWillems2014}.

\begin{definition}\label{def:mock}
A polynomial map $F\in\F_q[X_1,\ldots,X_n]^n$ is a \emph{mock
automorphism over $\F_q$} if $F$ is a Keller map and the induced function
\[
 F\colon\F_q^n\longrightarrow\F_q^n
\]
is bijective.
\end{definition}

A polynomial automorphism remains bijective after every finite scalar
extension. Maubach and Willems asked whether a mock automorphism retains
this property for infinitely many extensions.

\begin{quote}
\textbf{Maubach--Willems Conjecture 2.7.}
If $F$ is a mock automorphism over $\F_q$, then there are infinitely many
finite extensions $K/\F_q$ for which $F\colon K^n\to K^n$ is bijective.
\end{quote}

Immediately before the conjecture, they considered
\[
 (x,y)\longmapsto
 \bigl(x+y(x^p-x),\;y+y(x^p-x)\bigr).
\]
This map is the identity on $\F_p^2$ and is noninjective over every proper
finite extension, but it is not Keller. Their conjecture asks whether the
Jacobian condition rules out this behavior. They proved the conjecture for
a one-variable class of linearized polynomials
\cite[Example~2.6, Conjecture~2.7, and Lemma~2.8]{MaubachWillems2014}; see
also \cite{Berson2014} for the matrix theory of linearized polynomial maps.

The extension degrees on which a finite cover induces bijections are
classically organized into an \emph{exceptionality set}. In one variable this
is the usual exceptionality set of a polynomial; for finite separable covers it
is studied through the off-diagonal fiber product, arithmetic and geometric
monodromy, and extension of constants
\cite{Fried2005,GuralnickTuckerZieve2007}. Our spectrum terminology is not
intended as a new general notion. The multivariate maps below are generally
nonfinite, so the standard exceptional-cover formalism does not determine their
spectra directly.

The maps also lie in the positive-characteristic framework of
Borisov--Gabber--Vasiu. Writing $\operatorname{GA}_n(k)$ for the polynomial
automorphism group, they call an endomorphism $e$ \emph{basic} when the double
orbit
\[
 \operatorname{GA}_n(k)e\operatorname{GA}_n(k)
\]
contains an endomorphism represented by an $F_n$-system
\[
 X_i+h_i(X_1^p,\ldots,X_n^p),\qquad 1\leq i\leq n
\]
\cite[Definition~2.2(2)]{BorisovGabberVasiu2025}. The family studied here is
itself represented by such a system. Its étaleness and possible nonproperness
therefore belong to an established geometric setting; the new point is the
exact arithmetic control of its finite-extension bijectivity.

Our main contributions are as follows.

\begin{enumerate}
\item \textbf{Exact criterion and spectrum realization.} Let $e\geq1$, let
$n\geq2$, and let $g\in k[T]$ satisfy $g(0)=0$.
For
\[
 \Psi_{g,e,n}(X_1,\ldots,X_n)
 =\bigl(X_1+(X_2g(X_1))^{p^e},X_2,\ldots,X_n\bigr)
\]
we prove an exact injectivity criterion over every perfect overfield. Among the
mock automorphisms in this family, the possible spectra are exactly
$\N_{>0}$ and the finite nonempty divisor-closed subsets of $\N_{>0}$.
Thus every combinatorially admissible finite spectrum occurs, and spectrum
$\{1\}$ disproves Maubach--Willems Conjecture~2.7 over every finite field and in every
dimension $n\geq2$.

\item \textbf{Sharp degree threshold under strong normalization.}
For
\[
 \mathcal N_{q,n}=\{F:\Jac(F)=I_n,\ F|_{\F_q^n}=\id\},
\]
every map with finite spectrum has total degree at least $p(q+1)$. For every
$n\geq2$, the bound is attained by a spectrum-$\{1\}$ map. The class is
stable under affine conjugation over $\F_q$ but is substantially stronger than
the mock-automorphism condition.

\item \textbf{One-variable constructions.}
For every odd $q$, and separately for $q=2$, we construct a derivative-one
polynomial with spectrum exactly $\{1\}$. For even $q>2$, a
Carlitz--Lenstra--Wan obstruction gives a nonexceptional example together
with an effective finite search range for its spectrum.

\item \textbf{Least degrees and surviving repairs.}
Over prime fields we reduce the least-degree problem to the least degree
$\delta_p$ of a nonlinear reduced permutation polynomial. The uniform equality
$\mu_p=p\delta_p$ for $p\geq5$ is explicitly classification-dependent; broad
families are handled by elementary arguments and an explicit collision curve.
Finiteness does not repair the conjecture, whereas a cofinite spectrum
characterizes polynomial automorphisms among Keller maps. The prime-to-$p$
generic-degree question remains open and is related to work of Adjamagbo,
Maubach--Rauf, Lang, and Borisov--Gabber--Vasiu
\cite{Adjamagbo1995,MaubachRauf2017,Lang2025,BorisovGabberVasiu2025}.
\end{enumerate}

The exact criterion and realization theorem are elementary and independent of
exceptional-polynomial classification. The deep classification quoted in
Theorem~\ref{thm:fgsdegrees} enters only the uniform prime-field minimum
Theorem~\ref{thm:muexact}. The exact one-variable spectrum for even $q>2$,
the unrestricted multivariate degree minimum, and the prime-to-$p$ question
remain open.

\section{Extension spectra, collision schemes, and exceptional covers}
\label{sec:spectra}

Throughout, $q=p^r$ is a prime power.

\begin{definition}\label{def:spectrum}
For $F\in\F_q[X_1,\ldots,X_n]^n$, define its \emph{finite-extension
permutation spectrum} by
\[
 \Specper_q(F)
 =\{m\in\N_{>0}:F\colon\F_{q^m}^n\to\F_{q^m}^n
 \text{ is bijective}\}.
\]
A set $D\subseteq\N_{>0}$ is \emph{divisor-closed} if $m\in D$ and
$d\mid m$ imply $d\in D$. For arbitrary $n$, we call $F$
\emph{extension-exceptional} if $\Specper_q(F)$ is infinite. In one
variable this agrees with the standard term \emph{exceptional polynomial}.
For a multivariate polynomial map, $\deg F$ denotes the maximum of the total
degrees of its coordinate polynomials.
\end{definition}

The natural geometric object is the off-diagonal collision scheme
\[
 \operatorname{Col}(F)
 =\left(\A^n\times_{F}\A^n\right)\setminus\Delta.
\]
The diagonal is closed because polynomial morphisms are separated, so this
is an open subscheme of the fiber product.
For every field extension $K/\F_q$, its $K$-points are the ordered pairs of
distinct elements $x,y\in K^n$ satisfying $F(x)=F(y)$. Since
$\F_{q^m}^n$ is finite,
\begin{equation}\label{eq:collisioncriterion}
 m\in\Specper_q(F)
 \quad\Longleftrightarrow\quad
 \operatorname{Col}(F)(\F_{q^m})=\varnothing.
\end{equation}
For finite covers, this formulation leads to the classical monodromy theory
of exceptionality sets \cite{Fried2005,GuralnickTuckerZieve2007}. The maps
in Sections~\ref{sec:frob}--\ref{sec:degree} are generally nonfinite, which
is the principal reason that the finite-cover theory does not determine their
spectra.

\begin{proposition}[Spectrum invariance]\label{prop:invariance}
If $A,B\in\operatorname{Aut}_{\F_q}(\A^n)$, then
\[
 \Specper_q(A\circ F\circ B)=\Specper_q(F).
\]
\end{proposition}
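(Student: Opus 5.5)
The plan is to check, for each fixed $m\geq1$, that bijectivity of $F$ on $\F_{q^m}^n$ is unchanged by composing on either side with an $\F_q$-automorphism. The only input is that an automorphism defined over $\F_q$ induces a bijection on $K^n$ for every field extension $K/\F_q$.

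First I fix $A\in\operatorname{Aut}_{\F_q}(\A^n)$ with inverse $A^{-1}$, again a polynomial map with coefficients in $\F_q$. The identities $A\circ A^{-1}=A^{-1}\circ A=\id$ hold as identities of polynomials over $\F_q$. Evaluating polynomials at points of $K^n$ is compatible with composition: the evaluation of $G\circ H$ at $x$ equals the evaluation of $G$ at the evaluation of $H$ at $x$, because evaluation is a ring homomorphism $\F_q[X_1,\ldots,X_n]\to K$. Hence the induced maps $A\colon K^n\to K^n$ and $A^{-1}\colon K^n\to K^n$ are mutually inverse, so $A$ permutes $K^n$. The same holds for $B$. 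This applies in particular to $K=\F_{q^m}$. It also uses that coefficients lie in $\F_q\subseteq K$, so the maps make sense on $K^n$.

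Next, with $K=\F_{q^m}$, the function induced by $A\circ F\circ B$ on $K^n$ is the composite of three induced functions, by the same compatibility of evaluation with composition. A composite $\alpha\circ\varphi\circ\beta$ with $\alpha,\beta$ bijections of a set is bijective if and only if $\varphi$ is. Therefore $m\in\Specper_q(A\circ F\circ B)$ if and only if $m\in\Specper_q(F)$, for every $m$. One could equivalently phrase this through \eqref{eq:collisioncriterion}: $(x,y)\mapsto(B^{-1}x,B^{-1}y)$ identifies $\operatorname{Col}(A\circ F\circ B)(K)$ with $\operatorname{Col}(F)(K)$. The set-level argument is shorter, though.

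There is no real obstacle here. The one point requiring care is that the inverse of $A$ is itself defined over $\F_q$, which is what membership in $\operatorname{Aut}_{\F_q}(\A^n)$ means. An automorphism defined only over an extension would not automatically give bijections of $\F_{q^m}^n$ for every $m$. Likewise, a map that is merely bijective on $\F_q^n$, such as a mock automorphism, would not suffice.
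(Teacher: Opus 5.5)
Your proof is correct and follows the same route as the paper: $A$ and $B$ induce bijections of $\F_{q^m}^n$ for every $m$, so $A\circ F\circ B$ is bijective there exactly when $F$ is. You also spell out why the automorphisms stay bijective after scalar extension: their inverses are defined over $\F_q$, and evaluation is compatible with composition. The paper's one-line proof takes this for granted.
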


\begin{proof}
After every finite scalar extension, $A$ and $B$ remain bijections. Hence
$A\circ F\circ B$ is bijective exactly when $F$ is bijective.
\end{proof}

\begin{proposition}\label{prop:divisorclosed}
For every polynomial map $F$ over $\F_q$, the set $\Specper_q(F)$ is
divisor-closed.
\end{proposition}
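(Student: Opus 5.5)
The plan is to use the collision criterion \eqref{eq:collisioncriterion} together with the inclusion of finite fields. Suppose $m\in\Specper_q(F)$ and $d\mid m$. Then $\F_{q^d}\subseteq\F_{q^m}$, so $\F_{q^d}^n\subseteq\F_{q^m}^n$. Because $F$ has coefficients in $\F_q$, it maps $\F_{q^d}^n$ into $\F_{q^d}^n$. In the language of the collision scheme, this means $\operatorname{Col}(F)(\F_{q^d})\subseteq\operatorname{Col}(F)(\F_{q^m})$: a pair of distinct points of $\F_{q^d}^n$ with the same image under $F$ is also such a pair in $\F_{q^m}^n$.

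The steps, in order, are as follows.
\begin{enumerate}
\item Since $m\in\Specper_q(F)$, the criterion \eqref{eq:collisioncriterion} gives $\operatorname{Col}(F)(\F_{q^m})=\varnothing$.
\item The inclusion $\operatorname{Col}(F)(\F_{q^d})\subseteq\operatorname{Col}(F)(\F_{q^m})$ then forces $\operatorname{Col}(F)(\F_{q^d})=\varnothing$.
\item Applying \eqref{eq:collisioncriterion} in the other direction gives $d\in\Specper_q(F)$.
\end{enumerate}

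Concretely, the argument says that $F$ is injective on $\F_{q^d}^n$ because it is the restriction of an injective map. An injective self-map of a finite set is bijective, and this finiteness is exactly what \eqref{eq:collisioncriterion} encodes. There is no real obstacle here. The only point needing care is that $F$ preserves $\F_{q^d}^n$, which holds because its coefficients lie in $\F_q\subseteq\F_{q^d}$.
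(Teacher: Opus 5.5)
Your proof is correct and is essentially the paper's argument: injectivity on $\F_{q^m}^n$ restricts to injectivity on the $F$-stable finite set $\F_{q^d}^n$, hence bijectivity. Routing this through the inclusion of collision points and \eqref{eq:collisioncriterion} is only a rephrasing of that restriction step.
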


\begin{proof}
If $d\mid m$, then $\F_{q^d}\subseteq\F_{q^m}$. A map defined over
$\F_q$ preserves $\F_{q^d}^n$. If it is injective on $\F_{q^m}^n$,
then its restriction to the finite set $\F_{q^d}^n$ is injective and hence
bijective.
\end{proof}

For a one-variable polynomial $f\in\F_q[T]$, the set
$\Specper_q(f)$ is the classical exceptionality set. If $f$ is separable,
then $f$ is exceptional if and only if
\[
 \Phi_f(X,Y)=\frac{f(X)-f(Y)}{X-Y}
\]
has no absolutely irreducible factor defined over $\F_q$; see
\cite{FriedGuralnickSaxl1993,Fried2005,GuralnickTuckerZieve2007} and
\cite[Chapter~7]{LidlNiederreiter1997}. We use the following two standard
consequences.

\begin{theorem}[Carlitz--Lenstra--Wan]\label{thm:clw}
Let $f\in\F_q[T]$ be exceptional of degree $d>1$. Then
\[
 \gcd(d,q-1)=1.
\]
\end{theorem}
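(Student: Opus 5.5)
The plan is to prove the contrapositive: if $e=\gcd(d,q-1)>1$, then $f$ fails to permute $\F_{q^m}$ for all sufficiently large $m$, so $f$ is not exceptional. We may assume $f$ is separable, i.e.\ $f'\neq0$; this is the setting in which the fiber-product criterion recalled above applies. If instead $f=g(T^p)$, then we peel off the Frobenius, which is bijective on every $\F_{q^m}$ and preserves degrees up to a factor of $p$. Since $p\nmid q-1$, the gcd with $q-1$ is unchanged, so we reduce to $g$. We write $d=p^a d_0$ and, after peeling, work with the separable part. The cleaner route, which the paper treats as standard, is Fried's monodromy argument; I outline it.

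The first step sets up the monodromy. Let $t$ be transcendental and let $\Omega$ be the Galois closure of $\F_q(x)/\F_q(t)$ with $f(x)=t$. Put $A=\mathrm{Gal}(\Omega/\F_q(t))$ (arithmetic monodromy) and $G=\mathrm{Gal}(\Omega/\widehat{\F}(t))$ (geometric monodromy), where $\widehat{\F}$ is the constant field of $\Omega$. Exceptionality is equivalent, via Chebotarev or Lang--Weil applied to $\Phi_f$, to the following condition: every element of the coset of $A$ over the Frobenius generator of $A/G\cong \mathrm{Gal}(\widehat{\F}/\F_q)$ fixes exactly one root.

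The second step examines the place at infinity. The inertia group over $t=\infty$ contains an element acting on the $d$ roots. Its tame part is cyclic and contains a $d_0$-cycle structure whose decomposition group normalizes it. A generator $\sigma$ of the tame inertia is conjugated by a Frobenius element $\phi$ at $\infty$ via $\phi\sigma\phi^{-1}=\sigma^q$.

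The third step exploits $\gcd(d_0,q-1)=e>1$. Choose a prime $\ell\mid e$. The elements $\phi\sigma^j$ lie in the Frobenius coset, and for suitable $j$ the element $\phi\sigma^j$ acts on a tame cycle of length divisible by $\ell$ with either zero or at least $\ell$ fixed points. This is because $q\equiv1\pmod \ell$, so $\phi$ acts trivially on the $\ell$-part, and we can twist by $\sigma^{j}$ to kill the fixed point. This contradicts the exceptionality condition from the first step.

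The main obstacle I expect is the third step in the wild case, when $p\mid d$ and the inertia at $\infty$ is not cyclic. There I would restrict attention to the orbit structure of the tame quotient acting on blocks of size $p^a$. Alternatively, for the statement as used in the paper, I would simply cite the standard references (Lidl--Niederreiter, Chapter 7, and \cite{FriedGuralnickSaxl1993}), since the result is classical and the paper treats it as quoted.
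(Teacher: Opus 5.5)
The paper gives no proof of this statement; it quotes Lenstra's theorem in the elementary form of Cohen--Fried. Your outline is instead the Galois-theoretic argument through the decomposition group at $t=\infty$, and it is correct.

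\emph{Comparison.} What your route buys is a transparent reason for the hypothesis. Frobenius acts on tame inertia at infinity by $\sigma\mapsto\sigma^q$, and $\gcd(d,q-1)>1$ lets you twist a Frobenius lift by inertia into a fixed-point-free element of the Frobenius coset. What it costs is Fried's coset criterion (Chebotarev or Lang--Weil applied to $\Phi_f$) together with local ramification theory, machinery that an elementary treatment avoids.

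\emph{Inputs to state explicitly.}
\begin{enumerate}
\item Because $f$ is a polynomial, $t=\infty$ is totally ramified, so the inertia group $I$ is transitive on the $d$ roots. Your phrase ``contains an element acting on the $d$ roots'' should say this.
\item $I\subseteq G$, since inertia acts trivially on the residue field, which contains $\widehat{\F}$.
\item A Frobenius lift $\phi\in D$ restricts to the $q$-power map on $\widehat{\F}$, because the residue field at $t=\infty$ is $\F_q$. Hence $\phi I$ lies in the Frobenius coset.
\end{enumerate}

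\emph{The wild case.} The case you flag is not an obstacle, and one argument covers both the tame and wild cases.
\begin{enumerate}
\item Let $P$ be the wild inertia group. It is the unique Sylow $p$-subgroup of $I$, hence normal in $D$, so its orbits form a $D$-stable block system whose blocks have $p$-power size.
\item The cyclic group $I/P$, of order prime to $p$, permutes these blocks transitively, hence regularly through a quotient of order prime to $p$. So the number of blocks is prime to $p$ and the block size is a power of $p$. Since their product is $d=p^ad_0$, there are exactly $d_0$ blocks of size $p^a$. Label them $B_i=\sigma^iB_0$ with $i\in\mathbb{Z}/d_0$.
\item In the wild case only $\phi\sigma\phi^{-1}\equiv\sigma^q$ modulo $P$ holds, but that suffices. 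It gives $\phi(B_i)=B_{qi+c}$ for some constant $c$, so $\phi\sigma^j(B_i)=B_{q(i+j)+c}$.
\item Take a prime $\ell\mid\gcd(d_0,q-1)=\gcd(d,q-1)$. If $\phi\sigma^j$ fixes $B_i$, reducing $q(i+j)+c\equiv i\pmod{d_0}$ modulo $\ell$ gives $j+c\equiv 0\pmod{\ell}$.
\item Choosing $j\not\equiv -c\pmod{\ell}$ therefore produces an element of the Frobenius coset that fixes no block, hence no root. This contradicts exceptionality.
\end{enumerate}
Only this zero-fixed-point alternative is needed. The shift $c$, which your phrase ``$\phi$ acts trivially on the $\ell$-part'' ignores, is absorbed by the choice of $j$.

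Finally, in your reduction to the separable case: if the separable part is linear, then $d$ is a power of $p$ and there is nothing to prove.
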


\begin{proof}[Reference]
This is Lenstra's theorem in the elementary form published by Cohen and
Fried \cite[Theorem~1.1]{CohenFried1995}.
\end{proof}

\begin{proposition}[Effective finiteness]\label{prop:effective}
Let $f\in\F_q[T]$ be separable, nonexceptional, and of degree $d>1$. If
$m\in\Specper_q(f)$, then
\[
 q^m<d^4.
\]
Consequently, $\Specper_q(f)$ can be determined by checking only
\[
 m<4\log_q d.
\]
\end{proposition}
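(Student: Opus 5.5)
Since $f$ is separable and nonexceptional, the criterion recalled just before Theorem~\ref{thm:clw} says that $\Phi_f(X,Y)=(f(X)-f(Y))/(X-Y)$ has an absolutely irreducible factor $G\in\F_q[X,Y]$. The plan is to count points of $G$ over $\F_{q^m}$ with the Weil bound, show that for $q^m\geq d^4$ there is a point off the diagonal, and read it as a collision via \eqref{eq:collisioncriterion}.

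First I fix such a factor $G$ and set $e=\deg G\leq d-1$. Let $Q=q^m$. The absolutely irreducible curve $G=0$ is defined over $\F_q$, hence over $\F_Q$. I will quote the explicit Weil-type bound for plane curves, for instance Leep--Yeomans or Cafure--Matera:
\[
\bigl|\#\{(x,y)\in\F_Q^2:G(x,y)=0\}-Q\bigr|\leq (e-1)(e-2)\sqrt Q+e^2.
\]
This form applies to possibly singular affine plane curves. It gives
\[
\#\{G=0\}(\F_Q)\geq Q-(d-2)(d-3)\sqrt Q-(d-1)^2 .
\]

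Next I bound the diagonal points from above. A point $(x,x)$ on $G=0$ satisfies $G(x,x)=0$, and $G(x,x)$ divides $\Phi_f(x,x)=f'(x)$. Separability gives $f'\neq0$, so whenever $G(T,T)\neq0$ there are at most $\deg f'\leq d-1$ diagonal points. The degenerate case $G(T,T)\equiv0$ needs care. In that case $X-Y$ divides $G$, so irreducibility forces $G=c(X-Y)$, and then $(X-Y)^2\mid f(X)-f(Y)$. Setting $Y=X$ after differentiating gives $f'\equiv0$, which contradicts separability. So in all cases the off-diagonal points number at least
\[
Q-(d-2)(d-3)\sqrt Q-(d-1)^2-(d-1).
\]

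It remains to show this is positive when $Q\geq d^4$. Then $\sqrt Q\geq d^2$, so $(d-2)(d-3)\sqrt Q\leq (d^2-5d+6)\sqrt Q$. Also $Q=\sqrt Q\cdot\sqrt Q\geq d^2\sqrt Q$. The difference is therefore at least
\[
(5d-6)\sqrt Q-d^2+d\geq (5d-6)d^2-d^2>0
\]
for $d\geq2$. An off-diagonal point $(x,y)$ gives $x\neq y$ with $f(x)=f(y)$, so $m\notin\Specper_q(f)$. Contrapositively, $m\in\Specper_q(f)$ forces $q^m<d^4$, that is, $m<4\log_q d$. Proposition~\ref{prop:divisorclosed} is not needed, but it makes the final check short, since the spectrum is then determined by the finitely many $m<4\log_q d$.

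The main obstacle is the first step. I need a Weil bound that is valid for singular, possibly non-projectively-smooth affine curves, with explicit constants that survive the crude estimate $q^m\geq d^4$. The $e^2$ correction term for points at infinity and for singularities must be absorbed, and I would check that constant carefully against the cited source.
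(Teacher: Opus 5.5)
Your argument is correct, but it takes a different route from the paper.

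\textbf{What the paper does.} It never invokes a Weil bound directly. It observes that an absolutely irreducible factor of $\Phi_f$ defined over $\F_q$ remains one over every $\F_{q^m}$, so $f$ stays nonexceptional over $\F_{q^m}$. It then applies von zur Gathen's Theorem~1(i) with $Q=q^m$: a separable permutation polynomial of degree $d>1$ over a field of order $Q\geq d^4$ is exceptional. The contrapositive gives $q^m<d^4$.

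\textbf{What you do.} You reprove that theorem in the case at hand:
\begin{enumerate}
\item You count $\F_Q$-points on $G=0$ using an explicit Weil bound for singular affine plane curves.
\item You discard the at most $d-1$ diagonal points via $G(T,T)\mid\Phi_f(T,T)=f'(T)$.
\item You verify the resulting inequality for $Q\geq d^4$.
\end{enumerate}
Your use of the $\F_q$-rational factor over $\F_Q$ is exactly the paper's persistence observation, folded into the count.

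\textbf{Comparison.} The paper's route is a two-line reduction to a published result. Yours is self-contained up to the Weil bound and shows where the exponent $4$ comes from and how much slack remains, which is the point of the paper's Chahal--Ghorpade remark.

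\textbf{Two simplifications.}
\begin{enumerate}
\item The constant you worry about is harmless. The Aubry--Perret bound for singular curves gives $|\#\overline{C}(\F_Q)-Q-1|\leq(e-1)(e-2)\sqrt Q$ for an absolutely irreducible projective plane curve of degree $e$. Together with at most $e$ points at infinity, this already yields your lower bound with $e^2$ replaced by $e-1$.
\item The degenerate case $G(T,T)\equiv0$ never arises, because $G(T,T)$ divides $f'(T)\neq0$. Your separate argument through $(X-Y)^2\mid f(X)-f(Y)$ is correct but unnecessary.
\end{enumerate}
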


\begin{proof}
The polynomial $\Phi_f$ has an absolutely irreducible factor over
$\F_q$, and absolute irreducibility remains true after every extension of
constants. Thus $f$ remains nonexceptional over $\F_{q^m}$. Von zur
Gathen's Theorem~1(i) states that a separable permutation polynomial of
degree $d>1$ over a field of order $Q$ is exceptional whenever
$Q\geq d^4$ \cite[Theorem~1(i)]{vonzurGathen1991}. Its contrapositive
gives $Q<d^4$ for a nonexceptional permutation polynomial. Apply this with
$Q=q^m$.
\end{proof}

\begin{remark}
Chahal and Ghorpade sharpen the numerical estimate through a precise Weil
bound for affine curves \cite{ChahalGhorpade2018}. The coarser $d^4$ bound
is sufficient here.
\end{remark}

\section{An exact Frobenius-perturbation criterion}
\label{sec:frob}

Let $k$ be a perfect field of characteristic $p>0$, let $e\geq1$, let
$n\geq2$, and let $g\in k[T]$ satisfy $g(0)=0$. Define
\begin{equation}\label{eq:psi}
 \Psi_{g,e,n}(X_1,\ldots,X_n)
 =\bigl(X_1+(X_2g(X_1))^{p^e},X_2,\ldots,X_n\bigr).
\end{equation}

\begin{proposition}[A representative of a basic endomorphism]\label{prop:basic}
The map $\Psi_{g,e,n}$ is a basic endomorphism in the sense of
Borisov--Gabber--Vasiu.
\end{proposition}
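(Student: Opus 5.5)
The plan is to verify directly that $\Psi_{g,e,n}$ is itself an $F_n$-system in the sense of \cite[Definition~2.2(2)]{BorisovGabberVasiu2025}. Then the double orbit $\operatorname{GA}_n(k)\Psi_{g,e,n}\operatorname{GA}_n(k)$ contains $\Psi_{g,e,n}$, taking both automorphisms equal to the identity, and so $\Psi_{g,e,n}$ is basic by definition. Concretely, I will exhibit polynomials $h_1,\ldots,h_n\in k[Y_1,\ldots,Y_n]$ such that
\[
 \Psi_{g,e,n}=\bigl(X_1+h_1(X_1^p,\ldots,X_n^p),\ldots,X_n+h_n(X_1^p,\ldots,X_n^p)\bigr).
\]

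For $i\geq2$ the coordinate is $X_i$, so $h_i=0$ works. For the first coordinate, the task is to write $(X_2g(X_1))^{p^e}$ as a polynomial in $X_1^p,\ldots,X_n^p$. Because the characteristic is $p$ and $e\geq1$, raising to the $p^e$ power is additive. Write $g(T)=\sum_j c_jT^j$ with $c_0=0$. Then
\[
 (X_2g(X_1))^{p^e}=\sum_j c_j^{p^e}X_2^{p^e}X_1^{jp^e}
 =\sum_j c_j^{p^e}\,(X_2^p)^{p^{e-1}}(X_1^p)^{jp^{e-1}}.
\]
So we may take
\[
 h_1(Y_1,\ldots,Y_n)=\sum_j c_j^{p^e}Y_2^{p^{e-1}}Y_1^{jp^{e-1}}\in k[Y].
\]
This works because $e\geq1$, which makes the exponents $p^{e-1}$ integers. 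The coefficients $c_j^{p^e}$ lie in $k$, so perfectness of $k$ is not even needed at this step.

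The remaining step, and the only real checkpoint, is to confirm that the cited definition asks for nothing beyond this form. It might add a normalization, for example $h_i(0)=0$, or it might require the system to be of the form $X_i+h_i(X^p)$ with $X^p=(X_1^p,\ldots,X_n^p)$ exactly as stated in the introduction. If so, $h_1(0)=0$ holds, since every monomial contains $Y_2$; alternatively it follows from $g(0)=0$. I would also remark that $\Psi_{g,e,n}$ has Jacobian $I_n$, which is consistent with the étale setting of that paper. Beyond matching the definition verbatim, I expect no obstacle.
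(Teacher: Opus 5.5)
Your proof is correct and is essentially the paper's argument. The paper likewise writes $g(T)=\sum_j a_jT^j$, sets $h(U,V)=V^{p^{e-1}}\sum_j a_j^{p^e}U^{jp^{e-1}}$ so that $h(X_1^p,X_2^p)=(X_2g(X_1))^{p^e}$, and concludes that $\Psi_{g,e,n}$ is itself an $F_n$-system and hence basic; your side remarks that perfectness is not needed and that $h_1(0)=0$ holds automatically are accurate but not required.
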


\begin{proof}
Write $g(T)=\sum_j a_jT^j$ and define
\[
 h(U,V)=V^{p^{e-1}}\sum_j a_j^{p^e}U^{jp^{e-1}}.
\]
Then
\[
 h(X_1^p,X_2^p)
 =X_2^{p^e}\sum_j a_j^{p^e}X_1^{jp^e}
 =\bigl(X_2g(X_1)\bigr)^{p^e}.
\]
Thus $\Psi_{g,e,n}$ is itself represented by the $F_n$-system
\[
 (X_1+h(X_1^p,X_2^p),X_2,\ldots,X_n).
\]
Its double orbit under left and right composition by polynomial automorphisms
therefore contains such a representative, so it is basic by
\cite[Definition~2.2(2)]{BorisovGabberVasiu2025}.
\end{proof}

\begin{remark}
Borisov, Gabber, and Vasiu study basic endomorphisms in a much broader
geometric setting. Proposition~\ref{prop:basic} locates the present family
in that theory. The new result below is not the \emph{étaleness} of the
family, but the exact description of its injectivity over each perfect
extension.
\end{remark}

\begin{theorem}[Exact perfect-field criterion]\label{thm:exactcriterion}
Let $L$ be a perfect field containing $k$. Then
\[
 \Jac(\Psi_{g,e,n})=I_n.
\]
Moreover, the following conditions are equivalent:
\begin{enumerate}
\item $\Psi_{g,e,n}\colon L^n\to L^n$ is injective;
\item $\Psi_{g,e,n}\colon L^n\to L^n$ is the identity;
\item $g(s)=0$ for every $s\in L$.
\end{enumerate}
If $L=\F_{q^m}$ and $g\in\F_q[T]$, these conditions are also equivalent to
\[
 T^{q^m}-T\mid g(T).
\]
\end{theorem}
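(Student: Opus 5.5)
The Jacobian claim is a direct computation. The second coordinate block is the identity, and the first coordinate is $X_1+(X_2g(X_1))^{p^e}$. Since $e\geq1$, the derivative of a $p^e$-th power vanishes in characteristic $p$. Hence $\partial(X_2g(X_1))^{p^e}/\partial X_j=p^e(\cdots)=0$ for all $j$, and $\Jac(\Psi_{g,e,n})=I_n$.

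For the equivalences, the implications (3)$\Rightarrow$(2)$\Rightarrow$(1) are immediate. If $g$ vanishes on $L$, then $(x_2g(x_1))^{p^e}=0$ for every point, so $\Psi$ is the identity on $L^n$, and the identity is injective.

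The substantive step is (1)$\Rightarrow$(3), which I prove by contraposition using perfectness of $L$ to build an explicit collision. Suppose $s\in L$ with $g(s)\neq0$; note $s\neq0$ because $g(0)=0$. I look for two points with the same image that differ only in the first and second coordinates. Take $x=(0,0,x_3,\dots,x_n)$, whose image is $x$ itself since $g(0)=0$. Take $y=(s,t,x_3,\dots,x_n)$, whose image is $(s+(tg(s))^{p^e},t,x_3,\dots)$. That does not match $x$ in the second coordinate unless $t=0$, so I keep the second coordinate fixed instead.

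Take $x=(0,t,\ldots)$ and $y=(s,t,\ldots)$ with a common value $t$. Then $\Psi(x)=(0,t,\ldots)$ and $\Psi(y)=(s+(tg(s))^{p^e},t,\ldots)$. Equality requires $(tg(s))^{p^e}=-s$. Because $L$ is perfect, the Frobenius power $u\mapsto u^{p^e}$ is bijective on $L$, so there is a unique $w\in L$ with $w^{p^e}=-s$. Setting $t=w/g(s)$ gives $\Psi(x)=\Psi(y)$ with $x\neq y$, since $s\neq0$. Hence $\Psi$ is not injective on $L^n$. This uses $n\geq2$, and perfectness is exactly what is needed here.

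For the final clause, take $L=\F_{q^m}$, which is perfect, and $g\in\F_q[T]$. Then $g$ vanishes on all of $\F_{q^m}$ if and only if every element of $\F_{q^m}$ is a root of $g$. Since
\[
 T^{q^m}-T=\prod_{a\in\F_{q^m}}(T-a)
\]
is squarefree with exactly these roots, this is equivalent to $T^{q^m}-T\mid g(T)$ in $\F_{q^m}[T]$. Divisibility descends to $\F_q[T]$ because both polynomials have coefficients in $\F_q$ and division with remainder is field-independent.

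I expect no real obstacle. The only point needing care is the construction of the collision: choosing the two points with equal second coordinate, and invoking surjectivity of Frobenius on the perfect field $L$.
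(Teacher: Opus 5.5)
Your proposal is correct and follows the paper's proof essentially verbatim: the Jacobian is $I_n$ because the $p^e$th power has vanishing partial derivatives, (3)$\Rightarrow$(2)$\Rightarrow$(1) is immediate, and (1)$\Rightarrow$(3) uses the collision $\Psi_{g,e,n}(s,t,\ldots)=\Psi_{g,e,n}(0,t,\ldots)$ with $t=w/g(s)$ and $w^{p^e}=-s$, where $w$ exists by perfectness of $L$. The final clause likewise rests, as in the paper, on $T^{q^m}-T$ being squarefree with roots exactly $\F_{q^m}$.
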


\begin{proof}
All formal partial derivatives of a $p^e$th power vanish, so the Jacobian
matrix is $I_n$. If $g$ vanishes on $L$, then the map is visibly the identity.
Conversely, suppose that $g(s)\neq0$ for some $s\in L$. Since $L$ is perfect,
there is $u\in L$ with $u^{p^e}=-s$. Set
\[
 t=\frac{u}{g(s)}.
\]
Then
\[
 \Psi_{g,e,n}(s,t,0,\ldots,0)
 =\Psi_{g,e,n}(0,t,0,\ldots,0)
 =(0,t,0,\ldots,0).
\]
The two source points are distinct because $g(s)\neq g(0)=0$. Thus the map
is not injective.

For the final assertion, $T^{q^m}-T$ is squarefree and its roots are exactly
the elements of $\F_{q^m}$. Therefore it divides $g$ precisely when $g$
vanishes on that field.
\end{proof}

\begin{corollary}\label{cor:spectrumcriterion}
For $g\in\F_q[T]$ with $g(0)=0$,
\[
 \Specper_q(\Psi_{g,e,n})
 =\{m\geq1:T^{q^m}-T\mid g(T)\}.
\]
\end{corollary}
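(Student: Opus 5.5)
This corollary follows directly from Theorem~\ref{thm:exactcriterion} applied to the finite fields $L=\F_{q^m}$. Fix $g\in\F_q[T]$ with $g(0)=0$ and regard $\Psi_{g,e,n}$ as a polynomial map over $k=\F_q$. Every finite field is perfect, so each $\F_{q^m}$ is a perfect field containing $k$ and the theorem applies with $L=\F_{q^m}$.

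The first step is to translate membership in the spectrum into injectivity. By Definition~\ref{def:spectrum}, $m\in\Specper_q(\Psi_{g,e,n})$ means that $\Psi_{g,e,n}\colon\F_{q^m}^n\to\F_{q^m}^n$ is bijective. Since $\F_{q^m}^n$ is finite, a self-map of it is bijective exactly when it is injective. Membership is therefore equivalent to condition (1) of Theorem~\ref{thm:exactcriterion} with $L=\F_{q^m}$.

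The second step applies the final clause of that theorem. Because $g\in\F_q[T]$ and $L=\F_{q^m}$, condition (1) is equivalent to $T^{q^m}-T\mid g(T)$. Taking all $m\geq1$ then gives the stated set equality.

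There is no genuine obstacle. The only points to check are that finite fields are perfect, which the collision construction in the theorem's proof needs in order to extract $p^e$th roots, and that injective and bijective coincide on finite sets. Both are standard.
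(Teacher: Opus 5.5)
Your proposal is correct and matches the paper's route. The paper states this corollary without a separate proof, as the immediate specialization of Theorem~\ref{thm:exactcriterion} to $L=\F_{q^m}$. It relies on exactly your two remarks: finite fields are perfect, and a self-map of the finite set $\F_{q^m}^n$ is bijective if and only if it is injective.
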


The criterion also has a stronger extension-theoretic consequence.

\begin{corollary}\label{cor:properperfect}
Assume $g\neq0$ and that the zero set of $g$ in a fixed algebraic closure
$\overline{k}$ is exactly $k$. Then $\Psi_{g,e,n}$ is the identity on
$k^n$ and is noninjective over every proper perfect overfield $L\supsetneq k$.
\end{corollary}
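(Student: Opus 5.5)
The corollary is a direct specialization of Theorem~\ref{thm:exactcriterion}. We apply the theorem with base field $k$ and with $L$ ranging over the perfect overfields of $k$ inside a fixed algebraic closure $\overline{k}$. Note first that $g$ has coefficients in $k$ and $g(0)=0$, so the theorem applies.

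For the first assertion, take $L=k$. The hypothesis says that the zero set of $g$ in $\overline{k}$ is exactly $k$. In particular every $s\in k$ satisfies $g(s)=0$. Condition~(3) of the theorem therefore holds for $L=k$, and condition~(2) gives that $\Psi_{g,e,n}$ is the identity on $k^n$.

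For the second assertion, let $L\supsetneq k$ be a proper perfect overfield.

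\textbf{Algebraic case.} Suppose first that $L\subseteq\overline{k}$. Choose any $s\in L\setminus k$. By hypothesis $s$ is not a root of $g$, so $g(s)\neq0$. Condition~(3) fails, and the equivalence (1)$\Leftrightarrow$(3) shows that $\Psi_{g,e,n}$ is not injective on $L^n$.

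\textbf{General case.} If $L$ is not assumed algebraic over $k$, there are two possibilities.
\begin{enumerate}
\item $L$ contains an element $s$ transcendental over $k$. Then $g(s)\neq0$, because $g$ is a nonzero polynomial and $s$ is not algebraic. Again condition~(3) fails.
\item $L$ is algebraic over $k$. Then $L$ embeds into $\overline{k}$ over $k$, and the algebraic case applies.
\end{enumerate}
In either case the criterion yields noninjectivity. This is the only place where $g\neq0$ is used.

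I expect no real obstacle. The one point needing care is the interpretation of ``overfield'' relative to the fixed algebraic closure. Embedding algebraic $L$ into $\overline{k}$ and handling transcendental elements via $g\neq0$ covers every case. One could also give the explicit collision directly: take $u$ with $u^{p^e}=-s$ and $t=u/g(s)$, exactly as in the proof of Theorem~\ref{thm:exactcriterion}.
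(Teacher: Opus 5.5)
Your proof is correct and follows the paper's route: $\Psi_{g,e,n}$ is the identity on $k^n$ because $g$ vanishes on $k$, and for a proper perfect overfield you pick $s\in L\setminus k$ with $g(s)\neq0$ and invoke Theorem~\ref{thm:exactcriterion}. The only difference is in showing that $L$ has no roots of $g$ outside $k$. You split into a transcendental case and an algebraic case, handling the latter by embedding $L$ into $\overline{k}$. The paper instead argues that any root $s\in L$ of $g$ is algebraic (as $g\neq0$), and that its minimal polynomial divides $g$, so all its roots lie in $k$ and it must be linear; this avoids choosing an embedding.
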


\begin{proof}
The map is the identity on $k^n$ because $g$ vanishes on $k$. Let $L$ be a
proper perfect overfield. We first show that every root of $g$ in $L$ lies
in $k$. Such a root $s$ is algebraic over $k$, since $g\neq0$. Its minimal
polynomial over $k$ divides $g$. A root of that minimal polynomial in
$\overline{k}$ is therefore a root of $g$, hence belongs to $k$ by
hypothesis. Irreducibility then forces the minimal polynomial to be linear,
so $s\in k$.

Choose $s\in L\setminus k$. Then $g(s)\neq0$, and
Theorem~\ref{thm:exactcriterion} gives a collision in $L^n$.
\end{proof}

\begin{remark}
The hypotheses of Corollary~\ref{cor:properperfect} force $k$ to be finite,
because a nonzero polynomial has only finitely many roots in a field. The
formulation over perfect overfields is retained because the resulting
noninjectivity statement also covers transcendental perfect extensions.
\end{remark}

\section{Realization of finite spectra}
\label{sec:realization}

Proposition~\ref{prop:divisorclosed} gives a necessary combinatorial
condition on every spectrum. For finite spectra, Theorem~\ref{thm:exactcriterion}
shows that it is sufficient.

\begin{theorem}[Main spectrum-realization theorem]\label{thm:realization}
Let $D\subseteq\N_{>0}$ be finite, nonempty, and divisor-closed. For every
$q$, every $e\geq1$, and every $n\geq2$, there is a polynomial
$g_D\in\F_q[T]$ such that
\[
 \Specper_q(\Psi_{g_D,e,n})=D.
\]
One may take the monic least common multiple
\[
 g_D(T)=\lcm_{d\in\operatorname{Max}(D)}(T^{q^d}-T),
\]
where $\operatorname{Max}(D)$ denotes the elements maximal under divisibility.
\end{theorem}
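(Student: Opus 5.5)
The plan is to apply Corollary~\ref{cor:spectrumcriterion}, which reduces the spectrum to a divisibility condition: with $g=g_D$ we need $\{m\geq1 : T^{q^m}-T\mid g_D(T)\}=D$. First note that $g_D(0)=0$, since $T$ divides every factor $T^{q^d}-T$; hence the corollary applies for every $e\geq1$ and $n\geq2$. Since $D$ is finite and nonempty, $\operatorname{Max}(D)$ is finite and nonempty, so $g_D$ is a well-defined nonzero monic polynomial in $\F_q[T]$.

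Next I will identify the root set and squarefreeness of $g_D$. Each $T^{q^d}-T$ is squarefree, with roots exactly $\F_{q^d}$ in $\overline{\F}_q$. A least common multiple of squarefree polynomials is squarefree, and its roots are the union of their roots. Hence $g_D$ is squarefree with root set $Z=\bigcup_{d\in\operatorname{Max}(D)}\F_{q^d}$. Because $T^{q^m}-T$ is also squarefree, divisibility $T^{q^m}-T\mid g_D$ is equivalent to the inclusion $\F_{q^m}\subseteq Z$.

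Now compare the two sets. If $m\in D$, then by finiteness $m$ divides some maximal element $d\in\operatorname{Max}(D)$, so $\F_{q^m}\subseteq\F_{q^d}\subseteq Z$ and $m$ lies in the spectrum. For the converse, suppose $\F_{q^m}\subseteq Z$. Choose a primitive element $\alpha$ with $\F_{q^m}=\F_q(\alpha)$. Then $\alpha$ lies in some $\F_{q^d}$ with $d\in\operatorname{Max}(D)$, so $\F_{q^m}\subseteq\F_{q^d}$ and therefore $m\mid d$. Divisor-closedness of $D$ then gives $m\in D$.

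The only step with real content is this converse: a finite field contained in a finite union of subfields must lie in one of them. The primitive-element trick settles this at once. That completes the plan, which relies on Corollary~\ref{cor:spectrumcriterion} together with elementary facts about subfields of $\overline{\F}_q$.
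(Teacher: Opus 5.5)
Your proof is correct and takes essentially the same route as the paper: both reduce to Corollary~\ref{cor:spectrumcriterion} and identify the roots of $g_D$ as $\bigcup_{d\in\operatorname{Max}(D)}\F_{q^d}$. Both settle the converse with an element generating $\F_{q^m}$ over $\F_q$; your primitive element is the paper's ``element of degree exactly $m$,'' which the paper uses in contrapositive form. You also make explicit the check $g_D(0)=0$ that the corollary requires, which the paper leaves implicit.
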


\begin{proof}
The roots of $g_D$ are the union of the fields $\F_{q^d}$ for
$d\in\operatorname{Max}(D)$. If $m\in D$, then $m$ divides some maximal
$d\in D$, so $\F_{q^m}\subseteq\F_{q^d}$ and
$T^{q^m}-T\mid g_D$.

Conversely, suppose $m\notin D$. Then $m$ divides no maximal element of
$D$, since $D$ is divisor-closed. Choose an element of degree exactly $m$
over $\F_q$; such an element exists because an irreducible polynomial of
every positive degree exists over a finite field
\cite[Theorem~3.25]{LidlNiederreiter1997}. It belongs to none of the fields
$\F_{q^d}$ with $d\in\operatorname{Max}(D)$, and hence it is not a root of
$g_D$. Therefore $T^{q^m}-T\nmid g_D$. The result follows from
Corollary~\ref{cor:spectrumcriterion}.
\end{proof}

\begin{remark}[Degree-minimal defining polynomial]\label{rem:gDminimal}
The polynomial $g_D$ is the unique monic polynomial of least degree among
all nonzero $g\in\F_q[T]$ satisfying $g(0)=0$ for which the spectrum of $\Psi_{g,e,n}$ contains $D$.
Indeed, if $\Specper_q(\Psi_{g,e,n})$ contains $D$, then
$T^{q^d}-T\mid g$ for every $d\in D$, and hence $g_D\mid g$. Consequently,
$\deg g\geq\deg g_D$, with equality for a monic $g$ only when $g=g_D$.
Since Theorem~\ref{thm:realization} shows that $g_D$ realizes $D$ exactly,
it is in particular degree-minimal among exact realizers. It is squarefree,
and its roots are exactly
$\bigcup_{d\in\operatorname{Max}(D)}\F_{q^d}$.
\end{remark}

\begin{corollary}[Complete spectrum classification in the family]
\label{cor:familyclassification}
Let $g\in\F_q[T]$ satisfy $g(0)=0$, and suppose that
$\Psi_{g,e,n}$ is a mock automorphism over $\F_q$. Then exactly one of the
following occurs:
\begin{enumerate}
\item $g=0$, in which case $\Psi_{g,e,n}=\id$ and
$\Specper_q(\Psi_{g,e,n})=\N_{>0}$;
\item $g\neq0$, in which case the spectrum is finite and
\[
 m\in\Specper_q(\Psi_{g,e,n})
 \quad\Longrightarrow\quad q^m\leq\deg g.
\]
\end{enumerate}
Conversely, $\N_{>0}$ and every finite nonempty divisor-closed subset of
$\N_{>0}$ occur as spectra of mock automorphisms in this family.
\end{corollary}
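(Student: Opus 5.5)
The plan is to read everything off Corollary~\ref{cor:spectrumcriterion} and Theorem~\ref{thm:realization}. The case split is simply $g=0$ versus $g\neq0$, and these are mutually exclusive.

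If $g=0$, then $\Psi_{g,e,n}$ is literally the identity polynomial map. It is bijective on every $\F_{q^m}^n$, so its spectrum is $\N_{>0}$. Equivalently, $T^{q^m}-T$ divides $0$ for every $m$.

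If $g\neq0$, I will use Corollary~\ref{cor:spectrumcriterion}. For each $m\in\Specper_q(\Psi_{g,e,n})$ we have $T^{q^m}-T\mid g$. Since $g$ is a nonzero polynomial, comparing degrees gives $q^m\leq\deg g$. This bounds $m$ by $\log_q\deg g$, so the spectrum is finite.

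Nonemptiness is not needed for this bound. It does hold, though: the mock-automorphism hypothesis gives $1\in\Specper_q$, that is, $T^q-T\mid g$. In particular $\deg g\geq q$.

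For the converse, the spectrum $\N_{>0}$ is realized by $g=0$, which gives the identity map, trivially a mock automorphism. Now let $D$ be finite, nonempty and divisor-closed. Theorem~\ref{thm:realization} provides $g_D$ with $g_D(0)=0$ and $\Specper_q(\Psi_{g_D,e,n})=D$.

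It remains to check that $\Psi_{g_D,e,n}$ is a mock automorphism.
\begin{itemize}
\item \emph{Keller:} Theorem~\ref{thm:exactcriterion} gives $\Jac=I_n$, so the Jacobian determinant is $1\in\F_q^\times$.
\item \emph{Bijective on $\F_q^n$:} since $D$ is nonempty and divisor-closed, $1\in D$, which means exactly that the map is bijective on $\F_q^n$.
\end{itemize}

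No step is a real obstacle. The only points needing care are the following.
\begin{itemize}
\item Degree comparison needs $g\neq0$.
\item The condition $g(0)=0$ holds for $g_D$, since each $T^{q^d}-T$ vanishes at $0$.
\item The mock-automorphism property in the converse comes from $1\in D$, not from any additional hypothesis.
\end{itemize}
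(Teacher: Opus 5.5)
Your proposal is correct and follows the paper's proof. The case $g\neq0$ is a degree comparison via Corollary~\ref{cor:spectrumcriterion}, and the converse is Theorem~\ref{thm:realization} together with the identity map; your explicit check that $\Psi_{g_D,e,n}$ is a mock automorphism (Jacobian $I_n$, and $1\in D$ by divisor-closure) fills in a step the paper leaves implicit.
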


\begin{proof}
If $g=0$, the map is the identity. If $g\neq0$ and
$m\in\Specper_q(\Psi_{g,e,n})$, then $T^{q^m}-T$ divides $g$ by
Corollary~\ref{cor:spectrumcriterion}; hence $q^m\leq\deg g$. The converse
realization is Theorem~\ref{thm:realization}, together with the identity
map.
\end{proof}

\begin{example}\label{ex:spectrum}
Let $D=\{1,2,3,4,6\}$. Its maximal elements under divisibility are $4$ and
$6$, so Theorem~\ref{thm:realization} gives
\[
 g_D(T)=\lcm(T^{q^4}-T,T^{q^6}-T)
       =\frac{(T^{q^4}-T)(T^{q^6}-T)}{T^{q^2}-T},
\]
since $\gcd(T^{q^4}-T,T^{q^6}-T)=T^{q^2}-T$.
For every $e\geq1$ and $n\geq2$, the map $\Psi_{g_D,e,n}$ is bijective
exactly over the extensions of degrees $1,2,3,4$, and $6$.
\end{example}

\begin{corollary}[Uniform counterexample]\label{cor:uniformmulti}
For every finite field $\F_q$, every $e\geq1$, and every $n\geq2$, the map
\begin{equation}\label{eq:uniformmulti}
 F_{q,e,n}(X_1,\ldots,X_n)
 =\left(X_1+\bigl(X_2(X_1^q-X_1)\bigr)^{p^e},
 X_2,\ldots,X_n\right)
\end{equation}
is a mock automorphism with
\[
 \Specper_q(F_{q,e,n})=\{1\}.
\]
It is noninjective over every proper perfect overfield of $\F_q$.
\end{corollary}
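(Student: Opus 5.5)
The plan is to specialize the general family to $g(T)=T^q-T$, so that $F_{q,e,n}=\Psi_{g,e,n}$ with this $g$. Then each assertion of the corollary follows from a result already proved in Sections~\ref{sec:frob}--\ref{sec:realization}. First, $g\in\F_q[T]$ and $g(0)=0$, so the standing hypotheses of Section~\ref{sec:frob} hold with $k=\F_q$.

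The Keller and mock properties come directly from Theorem~\ref{thm:exactcriterion}. That theorem gives $\Jac(F_{q,e,n})=I_n$, so $\det\Jac=1\in\F_q^\times$ and $F_{q,e,n}$ is a Keller map. Since $g$ vanishes on every $s\in\F_q$, condition (3) of the theorem holds with $L=\F_q$. Hence $F_{q,e,n}$ is the identity on $\F_q^n$, in particular bijective, and it is therefore a mock automorphism in the sense of Definition~\ref{def:mock}.

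For the spectrum, I would invoke Theorem~\ref{thm:realization} with $D=\{1\}$. This set is finite, nonempty and divisor-closed, and $\operatorname{Max}(D)=\{1\}$, so $g_D=T^q-T=g$ and the theorem yields $\Specper_q(F_{q,e,n})=\{1\}$. As a direct check, Corollary~\ref{cor:spectrumcriterion} says $m$ lies in the spectrum iff $T^{q^m}-T\mid T^q-T$. Comparing degrees, this forces $q^m\le q$, i.e.\ $m=1$.

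For noninjectivity over every proper perfect overfield, apply Corollary~\ref{cor:properperfect} with $k=\F_q$. Its hypotheses are that $g\ne0$ and that the zero set of $g$ in $\overline{\F_q}$ is exactly $\F_q$. Both hold because $T^q-T$ is squarefree with roots precisely the elements of $\F_q$. There is no real obstacle here; the only point needing care is to confirm that $\F_q$ is perfect (every finite field is), so the hypothesis of Section~\ref{sec:frob} that $k$ is perfect is met.
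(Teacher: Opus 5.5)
Your proposal is correct and follows essentially the same route as the paper's proof. You specialize to $g(T)=T^q-T$, read off the Keller, identity-on-$\F_q^n$ and spectrum assertions from Theorem~\ref{thm:exactcriterion} (your degree comparison $q^m\le q$ is exactly its finite-field divisibility criterion), and obtain noninjectivity over proper perfect overfields from Corollary~\ref{cor:properperfect}; routing the spectrum claim through Theorem~\ref{thm:realization} with $D=\{1\}$ is an equally valid, slightly roundabout alternative.
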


\begin{proof}
Take $g(T)=T^q-T$. This polynomial is nonzero, has degree $q$, and already
has the $q$ distinct roots in $\F_q$. Hence its roots in every field
extension are exactly the elements of $\F_q$. Corollary~\ref{cor:properperfect}
and the finite-field criterion in Theorem~\ref{thm:exactcriterion} give the
assertions.
\end{proof}

Thus Maubach--Willems Conjecture~2.7 fails not merely because some mock automorphism has a
finite spectrum: every finite divisor-closed spectrum allowed by the formal
subfield constraint occurs in the family \eqref{eq:psi}.

\section{Geometry and a sharp normalized degree bound}
\label{sec:degree}

We first record the geometric degree and the source of nonproperness.

\begin{proposition}\label{prop:genericdegree}
Let $g\in\F_q[T]$ have degree $d\geq1$. The morphism
$\Psi_{g,e,n}\colon\A^n\to\A^n$ is étale and generically finite of degree
\[
 dp^e.
\]
It is not finite and hence not proper.
\end{proposition}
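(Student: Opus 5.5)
Étaleness is immediate from Theorem~\ref{thm:exactcriterion}: $\Jac(\Psi_{g,e,n})=I_n$ has unit determinant. A morphism between smooth varieties of the same dimension whose Jacobian determinant is a unit is étale, by the Jacobian criterion. So the plan concentrates on the generic degree and on nonfiniteness. Write $\Psi=\Psi_{g,e,n}$ and $Y_1,\ldots,Y_n$ for the target coordinates. Since $\Psi$ fixes $X_2,\ldots,X_n$, we have $Y_i=X_i$ for $i\ge2$. Hence the function field extension is
\[
 \F_q(X_1,\ldots,X_n)\supseteq \F_q(Y_1,X_2,\ldots,X_n),\qquad Y_1=X_1+X_2^{p^e}g(X_1)^{p^e}.
\]
Set $K=\F_q(Y_1,X_2,\ldots,X_n)$. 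Then $X_1$ is a root of
\[
 P(T)=T+X_2^{p^e}g(T)^{p^e}-Y_1\in K[T],
\]
which has degree $dp^e$ in $T$ with leading coefficient $c^{p^e}X_2^{p^e}\neq0$, where $c$ is the leading coefficient of $g$.

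Next I show that $P$ is irreducible over $K$, which gives $[\F_q(X):K]=dp^e$. View $P$ as a polynomial in the variables $Y_1$ and $T$ over the ring $R=\F_q(X_2,\ldots,X_n)[\,]$, with $Y_1$ transcendental over $\F_q(X_2,\ldots,X_n)$. In this view $P$ is linear in $Y_1$ with coefficients $-1$ and $T+X_2^{p^e}g(T)^{p^e}$, and these two are coprime. So $P$ is irreducible in $\F_q(X_2,\ldots,X_n)[Y_1,T]$. By Gauss's lemma it is then irreducible in $K[T]$, since it is primitive as a polynomial in $T$ over $\F_q(X_2,\ldots,X_n)[Y_1]$. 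Hence the generic degree is $dp^e$. Because the Jacobian determinant is $1$, $P'(X_1)=1\neq0$, so the extension is separable. This is consistent with the étaleness already shown.

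For nonfiniteness, I compute the fiber over a point $(y_1,y_2,\ldots,y_n)\in\overline{\F}_q^{\,n}$. It consists of the roots $s$ of $s+y_2^{p^e}g(s)^{p^e}=y_1$. When $y_2\neq0$ this polynomial has degree $dp^e$ and, being separable, exactly $dp^e$ roots. When $y_2=0$ it has the single root $s=y_1$. The drop in fiber cardinality is not by itself a contradiction for a finite morphism, so I will argue more directly. If $\Psi$ were finite, $\F_q[X]$ would be integral over $\F_q[Y_1,X_2,\ldots,X_n]$, and the monic minimal polynomial of $X_1$ would have coefficients in this polynomial ring, which is integrally closed. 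That minimal polynomial is $P$ divided by its leading coefficient $c^{p^e}X_2^{p^e}$. After this division the constant term contains $X_2^{-p^e}$ and so does not lie in the polynomial ring. This contradiction shows $\Psi$ is not finite.

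Alternatively, one sees nonfiniteness from the fact that a finite étale morphism onto the connected target $\A^n$ has constant fiber cardinality $dp^e$, whereas the fibers over $y_2=0$ are singletons. Nonproperness then follows because a quasi-finite proper morphism is finite, and $\Psi$ is quasi-finite since every fiber above is finite. The only step needing care is the irreducibility of $P$ in $T$; the Gauss-lemma argument in the $Y_1$-linear form should settle it.
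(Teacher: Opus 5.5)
Your proof is correct. For the generic degree it matches the paper in substance. Both reduce to $[K(X_1):K(h(X_1))]=\deg h$ for $K=\F_q(X_2,\ldots,X_n)$ and $h(T)=T+X_2^{p^e}g(T)^{p^e}$. The paper reads this off from the pole order at infinity. You prove it by showing $h(T)-Y_1$ is irreducible over $K(Y_1)$: it is linear in $Y_1$ with unit coefficient, and then Gauss's lemma applies. Your argument runs verbatim over $\overline{\F}_q$, so it also gives the geometric degree the paper records.

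The genuine difference is in nonfiniteness. The paper's argument is your ``alternative'' paragraph: a finite étale morphism onto the connected target has constant geometric fiber cardinality, which the one-point fiber over $Y_2=0$ contradicts.

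Your main argument is purely algebraic instead. Finiteness would make $X_1$ integral over the integrally closed ring $\F_q[Y_1,X_2,\ldots,X_n]$. Its monic minimal polynomial $P/(c^{p^e}X_2^{p^e})$ would then have polynomial coefficients, which the term $-Y_1/(c^{p^e}X_2^{p^e})$ rules out. This avoids the theory of finite étale covers and locates the obstruction precisely: after inverting $X_2$ the normalized $P$ is monic over the localized ring, so $\Psi$ is finite over $Y_2\neq0$. The cost is that it depends on the irreducibility of $P$ to identify the minimal polynomial. The fiber-count argument needs no irreducibility, only that fibers over $Y_2\neq0$ have $dp^e>1$ points.

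Your aside that a drop in fiber cardinality is ``not by itself a contradiction'' is right for general finite morphisms. For your étale $\Psi$ it is a contradiction, as your own alternative paragraph shows.
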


\begin{proof}
The Jacobian matrix is $I_n$, so the morphism is étale. Write
$Y_i=\Psi_{g,e,n,i}(X_1,\ldots,X_n)$ for the target coordinates and put
\[
 K=\F_q(X_2,\ldots,X_n),\qquad
 h(T)=T+X_2^{p^e}g(T)^{p^e}\in K[T].
\]
Since $Y_i=X_i$ for $i\geq2$, the induced extension of function fields is
\[
 K(h(X_1))\subseteq K(X_1).
\]
The polynomial $h$ has degree $dp^e$ and derivative $h'(T)=1$. Hence it is
separable and
\[
 [K(X_1):K(h(X_1))]=\deg h=dp^e;
\]
for instance, this follows by comparing the pole order of $h(X_1)$ at the
place at infinity of $K(X_1)$. The degree is unchanged after extending the
constant field to $\overline{\F}_q$, so this is the geometric generic
degree.

Suppose the morphism were finite. Being finite étale over the connected
target, it would have constant geometric fiber cardinality $dp^e$. Over a
target point with $Y_2=0$, however, the equations give $X_2=0$,
$X_i=Y_i$ for $i\geq3$, and $X_1=Y_1$. The fiber therefore consists of one
point, whereas $dp^e\geq p>1$. This is a contradiction. Since an étale
morphism is quasi-finite and a proper quasi-finite morphism is finite \cite[Tag~0F2P]{StacksProject}, nonfiniteness also implies
nonproperness.
\end{proof}

\begin{remark}
The étaleness, nonfiniteness, and variable fiber cardinalities in
Proposition~\ref{prop:genericdegree} are instances of the general geometry
of basic endomorphisms developed in
\cite{BorisovGabberVasiu2025}. The proposition records the exact generic
degree for the sparse family used in the arithmetic arguments below.
\end{remark}

The next theorem treats the entire strongly normalized class, rather than
only the separated family above.

\begin{definition}\label{def:normalizedclass}
Let $\mathcal N_{q,n}$ be the \,\emph{strongly normalized class} of maps
$F\in\F_q[X_1,\ldots,X_n]^n$ satisfying
\[
 \Jac(F)=I_n,
 \qquad F|_{\F_q^n}=\id_{\F_q^n}.
\]
\end{definition}

\begin{remark}\label{rem:normalizedclass}
The class $\mathcal N_{q,n}$ is stable under affine conjugation over
$\F_q$: if $A$ is affine, then $AFA^{-1}$ again has identity Jacobian
matrix and fixes $\F_q^n$ pointwise. The class is nevertheless restrictive.
An arbitrary mock automorphism cannot in general be transformed into it by
affine changes of coordinates, because neither a constant identity Jacobian
matrix nor the identity base-field function is forced by the mock condition.
\end{remark}

\begin{lemma}[Zero derivatives]\label{lem:zeroderiv}
Let $k$ be perfect of characteristic $p$. If
$H\in k[X_1,\ldots,X_n]$ satisfies
\[
 \frac{\partial H}{\partial X_j}=0
 \quad(1\leq j\leq n),
\]
then $H=G^p$ for some $G\in k[X_1,\ldots,X_n]$.
\end{lemma}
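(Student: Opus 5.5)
I will argue monomial by monomial. Write
\[
 H=\sum_{\alpha} c_\alpha X^\alpha,\qquad X^\alpha=X_1^{\alpha_1}\cdots X_n^{\alpha_n}.
\]
The derivative $\partial H/\partial X_j$ equals $\sum_\alpha \alpha_j c_\alpha X^{\alpha-e_j}$. Distinct $\alpha$ with $\alpha_j\geq1$ give distinct monomials $X^{\alpha-e_j}$, so these terms cannot cancel. Hence the vanishing of $\partial H/\partial X_j$ is equivalent to $\alpha_j c_\alpha=0$ in $k$ for every $\alpha$. Since $k$ has characteristic $p$, this means that $p\mid\alpha_j$ whenever $c_\alpha\neq0$.

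Applying this for each $j=1,\ldots,n$, every monomial occurring in $H$ with nonzero coefficient has all exponents divisible by $p$. So I can write $\alpha=p\beta$ with $\beta\in\mathbf{Z}_{\geq0}^n$, which gives
\[
 H=\sum_\beta c_{p\beta}\bigl(X^{\beta}\bigr)^p.
\]

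Next I use that $k$ is perfect. The Frobenius $x\mapsto x^p$ is surjective on $k$, so for each coefficient I can choose $b_\beta\in k$ with $b_\beta^p=c_{p\beta}$. Set
\[
 G=\sum_\beta b_\beta X^\beta.
\]
In characteristic $p$ the Frobenius map on the polynomial ring $k[X_1,\ldots,X_n]$ is additive and multiplicative, since $(a+b)^p=a^p+b^p$ in any commutative ring of characteristic $p$. Therefore
\[
 G^p=\sum_\beta b_\beta^p X^{p\beta}=H.
\]

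No step is a real obstacle. The only points needing care are the no-cancellation observation in the first step and the use of perfectness to extract $p$th roots of the coefficients. Without perfectness the lemma fails: over $\mathbf{F}_p(t)$, the constant polynomial $H=t$ has all derivatives zero but is not a $p$th power.
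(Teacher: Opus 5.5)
Your proof is correct and follows essentially the same route as the paper: all exponents are divisible by $p$, so $H\in k[X_1^p,\ldots,X_n^p]$, and perfectness supplies $p$th roots of the coefficients. You also spell out the no-cancellation step and the additivity of Frobenius on $k[X_1,\ldots,X_n]$, which the paper leaves implicit.
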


\begin{proof}
Every exponent occurring in $H$ is divisible by $p$. Thus
\[
 H\in k[X_1^p,\ldots,X_n^p].
\]
Since $k$ is perfect, every coefficient has a $p$th root in $k$.
\end{proof}

\begin{lemma}[The first layer of the vanishing ideal]\label{lem:vanishing}
If $H\in\F_q[X_1,\ldots,X_n]$ vanishes on $\F_q^n$ and
$\deg H\leq q$, then
\[
 H=\sum_{j=1}^n c_j(X_j^q-X_j)
\]
for some $c_j\in\F_q$.
\end{lemma}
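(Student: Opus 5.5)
The plan is to reduce $H$ modulo the polynomials $X_j^q-X_j$ and then use the standard fact that a reduced polynomial vanishing on $\F_q^n$ must be zero.

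\textbf{Reduction.} Perform multivariate division of $H$ by the family $X_1^q-X_1,\ldots,X_n^q-X_n$. Concretely, repeatedly replace a factor $X_j^q$ in any monomial by $X_j$. This gives
\[
 H=\sum_{j=1}^n Q_j\,(X_j^q-X_j)+R,
\]
where $R$ has degree at most $q-1$ in each variable. Each replacement lowers the total degree, so $\deg R\leq\deg H\leq q$ and $\deg(Q_j(X_j^q-X_j))\leq\deg H$. The last bound needs care, since cancellation could a priori occur among the terms. I would avoid that issue by choosing the $Q_j$ explicitly from the monomials of $H$. Since $\deg H\leq q$, a monomial containing some $X_j^{a}$ with $a\geq q$ is $c\,X_j^q$ itself, because a monomial divisible by $X_j^q$ with total degree at most $q$ is exactly a scalar multiple of $X_j^q$. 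Hence the only reduction steps are $cX_j^q\mapsto cX_j$, and each $Q_j=c_j$ is a constant, namely the coefficient of $X_j^q$ in $H$. No iteration is needed, and the remainder $R=H-\sum_j c_j(X_j^q-X_j)$ has all exponents at most $q-1$.

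\textbf{Vanishing.} The polynomial $R$ vanishes on $\F_q^n$, because both $H$ and each $X_j^q-X_j$ do. A polynomial with every individual degree at most $q-1$ that vanishes on $\F_q^n$ is zero. I would prove this by induction on $n$: write $R=\sum_{i<q} R_i(X_1,\ldots,X_{n-1})X_n^i$. For each fixed point of $\F_q^{n-1}$, this is a one-variable polynomial of degree less than $q$ with $q$ roots, so every $R_i$ vanishes on $\F_q^{n-1}$. Induction then gives $R_i=0$. Alternatively, one can cite the dimension count: the $q^n$ reduced monomials span the $q^n$-dimensional space of functions $\F_q^n\to\F_q$. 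Therefore $R=0$, which gives the stated form with $c_j\in\F_q$.

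\textbf{Main obstacle.} There is no real obstacle. The only point needing attention is the observation that the degree hypothesis forces the reduction to touch only the pure powers $X_j^q$, so the multipliers are constants rather than polynomials. This is exactly what the hypothesis $\deg H\leq q$ buys.
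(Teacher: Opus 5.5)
Your proof is correct and takes the same route as the paper: reduce $H$ modulo the polynomials $X_j^q-X_j$, then conclude that the remainder is zero because a polynomial with all individual exponents at most $q-1$ that vanishes on $\F_q^n$ must be zero. The difference is only cosmetic. The paper uses a Gröbner-basis division with a degree-compatible order and notes that every quotient term has total degree at most $\deg H-q\leq 0$. You instead observe directly that the only monomial of total degree at most $q$ divisible by $X_j^q$ is $X_j^q$ itself, which makes the quotients visibly constant without that machinery.
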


\begin{proof}
The vanishing ideal of $\F_q^n$ is
\[
 I_q=(X_1^q-X_1,\ldots,X_n^q-X_n).
\]
Fix a degree-compatible monomial order. Then
\[
 \operatorname{LM}(X_j^q-X_j)=X_j^q,
\]
and these leading monomials are pairwise coprime, so the displayed
generators form a Gröbner basis. Divide $H$ by them. The remainder has
degree less than $q$ in every variable and represents the zero function on
$\F_q^n$; uniqueness of reduced polynomial representatives gives zero
remainder. Every quotient term has total degree at most
$\deg H-q\leq0$. Hence every quotient is constant, which proves the claim.
\end{proof}

\begin{lemma}[Additive maps have infinitely many permutation extensions]
\label{lem:additiveexceptional}
Let $F\colon\A^n_{\F_q}\to\A^n_{\F_q}$ be additive, by which we mean that
\[
 F(U+V)=F(U)+F(V)
\]
as a polynomial identity in two $n$-tuples of variables. Equivalently, $F$
is a homomorphism of the additive group scheme $\mathbf G_{a,\F_q}^n$;
on the points of every extension field it induces an $\F_p$-linear map.
Assume $\Jac(F)=I_n$ and $F|_{\F_q^n}=\id$. Then $\Specper_q(F)$ is infinite.
\end{lemma}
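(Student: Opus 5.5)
The plan is to exploit the fact that, on the points of any extension field, $F$ is an $\F_p$-linear map. Injectivity on $\F_{q^m}^n$ is then equivalent to triviality of the kernel, and the kernel can be controlled through the Frobenius action on the finitely many geometric kernel points.

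First I will show that the geometric kernel
\[
 G=\{x\in\overline{\F}_q^{\,n}:F(x)=0\}
\]
is a finite subgroup of $\overline{\F}_q^{\,n}$. It is a subgroup because $F$ is additive on $\overline{\F}_q$-points. For finiteness, $\Jac(F)=I_n$ makes $F$ étale, as in the proof of Proposition~\ref{prop:genericdegree}. An étale morphism of finite type is quasi-finite, so the fiber $F^{-1}(0)$ has only finitely many geometric points. (Alternatively one can argue directly: by additivity each coordinate is $X_i$ plus an $\F_q$-combination of $p$-power monomials $X_j^{p^k}$ with $k\ge1$, but the étale argument is cleaner.)

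Second, let $\varphi(x)=x^q$ be the coordinatewise $q$-power Frobenius. Because $F$ has coefficients in $\F_q$, $\varphi$ commutes with $F$ and so permutes $G$. Since $G$ is finite, some power $\varphi^L$ with $L\ge1$ acts as the identity on $G$; for instance take $L$ to be the order of $\varphi|_G$ in the symmetric group on $G$. For every $m$ we have
\[
 \ker\bigl(F|_{\F_{q^m}^n}\bigr)=G\cap\F_{q^m}^n=G^{\varphi^m},
\]
the set of points of $G$ fixed by $\varphi^m$.

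Third, the hypothesis $F|_{\F_q^n}=\id$ gives $G^{\varphi}=G\cap\F_q^n=\{0\}$, since a point fixed by $\varphi$ lies in $\F_q^n$, where $F(x)=x$. Now take any $m\equiv1\pmod L$. Then $\varphi^m=\varphi$ on $G$, so $G^{\varphi^m}=\{0\}$. Hence $F$ is injective, and therefore bijective, on the finite set $\F_{q^m}^n$. This shows $\{m\ge1:m\equiv1\pmod L\}\subseteq\Specper_q(F)$, which is an infinite set.

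The only step needing care is the finiteness of $G$, which I will justify by the quasi-finiteness of étale morphisms, as in the proof of Proposition~\ref{prop:genericdegree}. The rest is bookkeeping of the Frobenius action.
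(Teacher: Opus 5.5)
Your proposal is correct and follows the paper's argument: the geometric kernel is finite because $F$ is étale, and one then chooses $m$ so that no nonzero kernel point is defined over $\F_{q^m}$. The paper takes primes $m$ larger than the degrees of the fields of definition of the nonzero kernel points, while you take $m\equiv1\pmod L$ with $L$ the order of Frobenius on the kernel; in both cases the point is that none of those degrees divides $m$.
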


\begin{proof}
Because $F$ is étale, its geometric kernel
$\ker(F)(\overline{\F}_q)$ is finite. It contains no nonzero
$\F_q$-point. If the nonzero kernel is empty, then $F$ is injective, and
hence bijective, on every finite extension. Otherwise, let
$d_1,\ldots,d_s>1$ be the degrees of the minimal fields of definition of
its nonzero points. Every prime integer $m>\max_i d_i$ is divisible by none
of the $d_i$, so
\[
 \ker(F)(\F_{q^m})=\{0\}.
\]
For all such primes, the additive endomorphism of the finite group
$\F_{q^m}^n$ is injective and hence bijective.
\end{proof}

\begin{theorem}[Sharp normalized degree threshold]\label{thm:normalizeddegree}
Let $n\geq1$. If $F\in\mathcal N_{q,n}$ and
\[
 \deg F<p(q+1),
\]
then $F$ is extension-exceptional. Equivalently, every map in
$\mathcal N_{q,n}$ with finite spectrum has degree at least $p(q+1)$.
For every $n\geq2$, the
bound is attained by $F_{q,1,n}$ in \eqref{eq:uniformmulti}, whose spectrum
is $\{1\}$.
\end{theorem}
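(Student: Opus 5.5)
The plan is to show that the Jacobian and base-field conditions, together with the degree bound, force $F$ to be additive, and then to apply Lemma~\ref{lem:additiveexceptional}.

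\emph{Step 1: extracting $p$th powers.} Write $F_i=X_i+R_i$. The condition $\Jac(F)=I_n$ says that every partial derivative of $R_i$ vanishes. Since $\F_q$ is perfect, Lemma~\ref{lem:zeroderiv} gives $R_i=H_i^p$ for some $H_i\in\F_q[X_1,\ldots,X_n]$.

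\emph{Step 2: $H_i$ vanishes on $\F_q^n$.} The condition $F|_{\F_q^n}=\id$ says that $H_i(a)^p=0$ for every $a\in\F_q^n$. Since Frobenius is injective on $\F_q$, this gives $H_i(a)=0$.

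\emph{Step 3: bounding $\deg H_i$.} If $H_i\neq0$, then $H_i$ is nonconstant, because a nonzero constant cannot vanish on $\F_q^n$. Hence $\deg H_i^p=p\deg H_i\geq p>1$. So the top-degree part of $H_i^p$ cannot cancel against the linear term $X_i$, and
\[
 \deg F_i=p\deg H_i .
\]
The hypothesis $\deg F<p(q+1)$ therefore forces $\deg H_i\leq q$.

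\emph{Step 4: the vanishing ideal.} Lemma~\ref{lem:vanishing} now applies and gives
\[
 H_i=\sum_j c_{ij}(X_j^q-X_j),\qquad c_{ij}\in\F_q .
\]

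\emph{Step 5: additivity.} Each $X_j^q-X_j$ is an additive polynomial, and $\F_q$-linear combinations of additive polynomials are additive. The $p$th power of an additive polynomial is again additive, since $(A+B)^p=A^p+B^p$ in characteristic $p$. Therefore
\[
 F_i=X_i+\Bigl(\sum_j c_{ij}(X_j^q-X_j)\Bigr)^p
\]
satisfies $F(U+V)=F(U)+F(V)$ as a polynomial identity. Since $F$ also has $\Jac(F)=I_n$ and fixes $\F_q^n$, Lemma~\ref{lem:additiveexceptional} shows that $\Specper_q(F)$ is infinite.

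The equivalent formulation is simply the contrapositive: a map in $\mathcal N_{q,n}$ with finite spectrum must have degree at least $p(q+1)$. The argument above uses no restriction on $n$, so it covers $n\geq1$.

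\emph{Step 6: sharpness.} Take $F_{q,1,n}$ from \eqref{eq:uniformmulti} with $n\geq2$. Its Jacobian is $I_n$ by Theorem~\ref{thm:exactcriterion}, and it fixes $\F_q^n$ because $X_1^q-X_1$ vanishes there, so it lies in $\mathcal N_{q,n}$. Its degree is $\deg\bigl(X_2(X_1^q-X_1)\bigr)^p=p(q+1)$. Corollary~\ref{cor:uniformmulti} gives spectrum $\{1\}$, so the bound is attained.

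The only delicate point I expect is Step 3, namely that the degree of $F_i$ really equals $p\deg H_i$; the other steps are direct applications of the earlier lemmas.
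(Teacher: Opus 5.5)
Your proposal is correct and follows the paper's proof step for step: extracting $p$th roots via Lemma~\ref{lem:zeroderiv}, deducing vanishing on $\F_q^n$, bounding $\deg H_i\leq q$, applying Lemma~\ref{lem:vanishing}, observing additivity and invoking Lemma~\ref{lem:additiveexceptional}, and getting sharpness from Corollary~\ref{cor:uniformmulti}. The one difference is that your Step~3 spells out the no-cancellation argument: $H_i$ is nonconstant, so $\deg H_i^p\geq p>1$ and the top-degree part cannot cancel against $X_i$; the paper leaves this implicit.
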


\begin{proof}
Write $F_i=X_i+H_i$. Since $\Jac(F)=I_n$, all partial derivatives of each
$H_i$ vanish. By Lemma~\ref{lem:zeroderiv}, $H_i=G_i^p$. The identity on
$\F_q^n$ implies that $G_i$ vanishes on $\F_q^n$.

For $G_i\neq0$, one has $\deg(G_i^p)=p\deg G_i$; the case $G_i=0$ is
harmless. The degree hypothesis therefore gives $\deg G_i<q+1$, hence
$\deg G_i\leq q$.
Lemma~\ref{lem:vanishing} yields
\[
 G_i=\sum_{j=1}^n c_{ij}(X_j^q-X_j).
\]
Therefore
\[
 F_i=X_i+\sum_{j=1}^n c_{ij}^p(X_j^{pq}-X_j^p),
\]
so $F$ satisfies the additive polynomial identity. Lemma~\ref{lem:additiveexceptional} applies.
The final assertion follows from Corollary~\ref{cor:uniformmulti}, whose map
has degree exactly $p(q+1)$.
\end{proof}

\begin{remark}\label{rem:globaldegree}
Theorem~\ref{thm:normalizeddegree} is global inside the strongly normalized
class $\mathcal N_{q,n}$; it is not merely a statement about separated maps.
It does not determine the least degree among all mock automorphisms. Affine
normalization can arrange $F(0)=0$ and $\Jac(F)(0)=I_n$, but it cannot in
general force the polynomial identities $\Jac(F)=I_n$ and
$F|_{\F_q^n}=\id$. An unrestricted minimum would require new control of
low-degree Keller maps outside this class.
\end{remark}

\section{One-variable counterexamples}
\label{sec:onevariable}

A one-variable Keller map is simply a polynomial with nonzero constant
derivative. We first construct counterexamples with exact spectrum $\{1\}$
over every odd finite field.

\subsection{Exact spectrum over odd finite fields}

Assume $q$ is odd and set
\begin{equation}\label{eq:uq}
 u_q(T)=T+\bigl(T(T^q-T)\bigr)^p\in\F_q[T].
\end{equation}
Then $u_q'(T)=1$ and $u_q(a)=a$ for $a\in\F_q$.

\begin{lemma}[A norm-one choice]\label{lem:normchoice}
Let $K=\F_{q^m}$ with $q$ odd and $m>1$, and let
$\Norm=\Norm_{K/\F_q}$. There is $z\in K^\times$ such that
\[
 \Norm(z)=1,
 \qquad \Norm(2-z)\neq1.
\]
\end{lemma}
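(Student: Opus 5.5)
The plan is to argue by contradiction using the polynomial description of the norm-one subgroup. Put
\[
 s=\frac{q^m-1}{q-1}=1+q+\cdots+q^{m-1}.
\]
The kernel $H=\{z\in K^\times:\Norm(z)=z^s=1\}$ has exactly $s$ elements. These are the $s$ distinct roots of $T^s-1$, which splits in $K$ because $s\mid q^m-1$.

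First I dispose of a degenerate case. If $2\in H$, take $z=2$. Then $2-z=0$ and $\Norm(0)=0\neq1$, so we are done.

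Otherwise, suppose that $\Norm(2-z)=1$ for every $z\in H$. Then the map $z\mapsto 2-z$ sends $H$ into $H$. Since it is injective and $H$ is finite, it permutes $H$. Hence the polynomial
\[
 P(T)=(2-T)^s-1
\]
vanishes at all $s$ points of $H$. Now $P$ has degree $s$ and leading coefficient $(-1)^s$. A polynomial of degree $s$ that vanishes on the $s$ distinct roots of $T^s-1$ is a scalar multiple of $T^s-1$. Comparing leading coefficients gives
\[
 (2-T)^s-1=(-1)^s(T^s-1).
\]

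The key step is to expand the left side using the base-$q$ digits of $s$. All digits of $s$ equal $1$, and $2^{q^j}=2$ in $\F_q$. Therefore
\[
 (T-2)^s=\prod_{j=0}^{m-1}(T-2)^{q^j}=\prod_{j=0}^{m-1}\bigl(T^{q^j}-2\bigr).
\]
For a fixed $j$, take the factor $-2$ from position $j$ and $T^{q^i}$ from every other position. This produces the monomial $T^{s-q^j}$ with coefficient $-2$. No other choice of factors yields the same exponent, by uniqueness of base-$q$ expansions with digits in $\{0,1\}$. Since $q$ is odd, $-2\neq0$, so $(T-2)^s$ contains $T^{s-q^j}$ with nonzero coefficient.

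Because $m>1$, we have $0<s-q^j<s$. The right side $T^s-1+(-1)^s$, which is $(-1)^s$ times the identity above, has no monomial in that intermediate degree range. This is a contradiction, so some $z\in H$ has $\Norm(2-z)\neq1$.

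I expect the main obstacle to be exactly this coefficient comparison. The naive approach of looking only at the $T^{s-1}$ coefficient gives $-2s\equiv-2m \pmod p$, which fails when $p\mid m$. For this reason I use the Frobenius factorization rather than the ordinary binomial theorem.
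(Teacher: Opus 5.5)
Your proof is correct and is essentially the paper's argument: the paper also notes that $(2-Z)^M-1=\prod_i(2-Z^{q^i})-1$ would vanish on the norm-one subgroup and hence equal $(-1)^m(Z^M-1)$, and gets the contradiction from the coefficient $\pm2$ of $Z^{M-1}$ (your case $j=0$), without a separate treatment of $z=2$, since the contradiction hypothesis already covers it. One correction to your closing remark: $s=1+q+\cdots+q^{m-1}\equiv1\pmod p$ (it is $\equiv m$ only modulo $q-1$), so the naive binomial coefficient $-2s$ of $T^{s-1}$ equals $-2\neq0$, and that route works whether or not $p\mid m$.
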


\begin{proof}
Let
\[
 M=\frac{q^m-1}{q-1},
 \qquad H=\ker(\Norm)\subseteq K^\times.
\]
Thus $|H|=M$, and $H$ is the set of roots of the separable polynomial
$Z^M-1$ because $p\nmid M$. Suppose, toward a contradiction,
that $\Norm(2-z)=1$ for every $z\in H$. Then
\[
 P(Z)=\prod_{i=0}^{m-1}(2-Z^{q^i})-1
\]
vanishes on $H$. Since $\deg P=M$, it follows that
\[
 P(Z)=(-1)^m(Z^M-1).
\]
However, because $m>1$, the coefficient of $Z^{M-1}$ on the left is
\[
 2(-1)^{m-1}\neq0.
\]
Indeed, the exponents obtained by choosing either $2$ or
$-Z^{q^i}$ from each factor have base-$q$ digits in $\{0,1\}$. Since
$q>2$, uniqueness of the base-$q$ expansion shows that the only way to
obtain
\[
 M-1=q+q^2+\cdots+q^{m-1}
\]
is to choose the constant term $2$ from the factor indexed by $i=0$ and the
monomial term from every other factor. The coefficient of $Z^{M-1}$ on the
right is zero, a contradiction.
\end{proof}

\begin{theorem}[Exact odd-characteristic spectrum]\label{thm:oddexact}
For every odd prime power $q$,
\[
 \Specper_q(u_q)=\{1\}.
\]
Equivalently, $u_q$ fixes $\F_q$ pointwise and is not injective on
$\F_{q^m}$ for any $m>1$.
\end{theorem}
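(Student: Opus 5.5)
The plan is to produce, for each $m>1$, an explicit collision in $K=\F_{q^m}$. Two observations make this manageable. First, since $u_q$ has coefficients in $\F_q$, it commutes with the $q$-power Frobenius, so $u_q(a^q)=u_q(a)^q$. Second, $u_q$ fixes $\F_q$ pointwise. Consequently, if some $a\in K\setminus\F_q$ satisfies $u_q(a)=c\in\F_q$, then $a\neq c$ and $u_q(a)=u_q(c)$, which is a collision. Because $K$ is finite, noninjectivity is the same as nonbijectivity, and the case $m=1$ is already given by $u_q|_{\F_q}=\id$. So the whole task reduces to finding $a\notin\F_q$ with $u_q(a)\in\F_q$. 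If this Galois-descent route proves awkward, the fallback is to search directly for a pair $a\neq b$ satisfying
\[
a-b=-\bigl(w(a)-w(b)\bigr)^p,\qquad w(T)=T(T^q-T).
\]

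To locate such an $a$, I would write $w(a)=a^2(z-1)$ with $z=a^{q-1}$. By Hilbert~90, as $a$ ranges over $K^\times$, the element $z$ ranges over the norm-one subgroup $H=\ker\Norm_{K/\F_q}$, and each $z\in H$ corresponds to a full $\F_q^\times$-coset of values $a$. The condition $u_q(a)\in\F_q$ becomes $u_q(a)^q=u_q(a)$, that is,
\[
a(z-1)+\bigl(w(a)^q-w(a)\bigr)^p=0,
\qquad
w(a)^q=a^{2q}(z^q-1)=a^2z^2(z^q-1).
\]
Both terms are homogeneous in $a$ up to the twist by $z$. I would therefore scale $a=\lambda a_0$ with $\lambda\in\F_q^\times$, or more generally solve for $a$ by extracting a $(q-1)$-th root or a $p$-th root; the latter is harmless because $K$ is perfect. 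This should reduce everything to the solvability of a norm equation in $z$. I expect a solution $a$ to exist exactly when an auxiliary element built from $z$ and $2-z$ is a $(q-1)$-th power, equivalently has norm $1$. The appearance of $2-z$ is plausible because $w(-a)=w(a)$ when $q$ is odd, so the affine combinations of $a$ and $-a$ that arise naturally bring in $2-z$. Lemma~\ref{lem:normchoice}, which guarantees $z\in H$ with $\Norm(2-z)\neq1$, would then supply the needed $z$: the inequality $\Norm(2-z)\neq1$ is what ensures that the resulting $a$ does not lie in $\F_q$, or equivalently that the two points of the collision are distinct.

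The main obstacle is this middle algebraic step: finding the correct ansatz for $a$ in terms of $z$ so that the collision equation collapses to a clean norm condition, and checking that the nondegeneracy requirement, $a\notin\F_q$ or $a\neq b$, is precisely $\Norm(2-z)\neq1$. I would first try the symmetric ansatz of comparing $u_q(a)$ with $u_q(b)$ for $b=(1-z)a$ or $b=(2-z)a$-type multiples of $a$, then simplify using $z^{q-1}$-relations, aiming to match the hypotheses of Lemma~\ref{lem:normchoice} exactly. Oddness of $q$ enters through the lemma itself (it needs $q>2$ and $2\neq0$) and through $w(-a)=w(a)$.
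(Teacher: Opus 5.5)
\textbf{There is a genuine gap: the main reduction fails for infinitely many $m$.}

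Finding $a\in\F_{q^m}\setminus\F_q$ with $u_q(a)\in\F_q$ would indeed give a collision. But such an $a$ is a root of one of the $q$ polynomials $u_q(T)-c$ with $c\in\F_q$, each of degree $p(q+1)$. So $a$ has degree at most $p(q+1)$ over $\F_q$. Now take a prime $m>p(q+1)$. Every element of $\F_{q^m}\setminus\F_q$ has degree exactly $m$ over $\F_q$, so $u_q^{-1}(\F_q)\cap\F_{q^m}=\F_q$ and no such $a$ exists. The theorem nevertheless asserts that collisions exist in these fields.

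Equivalently, since $u_q(a)\in\F_q$ means $u_q(a^q)=u_q(a)$, you are looking only for Frobenius-conjugate collisions $(a,a^q)$, and these disappear in large prime-degree extensions. Your own equations show why. The choice $z=a^{q-1}$ already fixes $a$ up to $\F_q^\times$. The condition $a(z-1)+a^{2p}\bigl(z^2(z^q-1)-(z-1)\bigr)^p=0$ is a second, independent constraint on the same unknown, and extracting $(q-1)$st or $p$th roots cannot satisfy both in general. Your fallback equation is just the definition of a collision and supplies no mechanism.

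\textbf{The missing idea is to parametrize the collision by its difference, not by a point.} Put $G(T)=T(T^q-T)$ (your $w$), fix $s\neq0$, set $z=s^{q-1}$, and expand:
\[
 G(y+s)-G(y)=s\bigl(y^q+(z-2)y+s^q-s\bigr).
\]
For fixed $s$ this is affine $\F_q$-linear in $y$. Choose $v$ with $v^p=-s$, which is possible by perfectness. Then $u_q(y+s)=u_q(y)$ follows from the linear equation
\[
 y^q+(z-2)y=v/s-s^q+s.
\]
This is solvable as soon as $y\mapsto y^q+(z-2)y$ is bijective on $\F_{q^m}$. That happens exactly when $2-z$ is not a nonzero $(q-1)$st power, i.e.\ when $\Norm(2-z)\neq1$. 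By Hilbert~90 every norm-one $z$ has the form $s^{q-1}$, and Lemma~\ref{lem:normchoice} supplies the required $z$.

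\textbf{This also corrects the roles you assigned.}
\begin{itemize}
\item Distinctness of the two points is automatic from $s\neq0$.
\item The hypothesis $\Norm(2-z)\neq1$ is what makes the linear equation solvable, not what forces $a\notin\F_q$.
\item The relevant $z$ is $s^{q-1}$ for the difference, not $a^{q-1}$ for a point.
\item The element $2-z$ comes from the cross terms $ys^q-2ys=(z-2)sy$, not from the symmetry $G(-a)=G(a)$.
\end{itemize}
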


\begin{proof}
Fix $K=\F_{q^m}$ with $m>1$. Choose $z$ as in
Lemma~\ref{lem:normchoice}. The image of the map
$K^\times\to K^\times$, $s\mapsto s^{q-1}$, is exactly
$\ker(\Norm)$; this is the standard norm criterion in a cyclic finite-field
extension, see \cite[Section~2.3]{LidlNiederreiter1997}. Choose
$s\in K^\times$ satisfying
\[
 s^{q-1}=z.
\]
Let $w\in K$ be the unique element with $w^p=-s$.

Put $G(T)=T(T^q-T)$. For $x=y+s$, a direct calculation gives
\begin{equation}\label{eq:Gdifference}
 G(y+s)-G(y)
 =s\bigl(y^q+(z-2)y+s^q-s\bigr).
\end{equation}
Consider the $\F_q$-linear map
\[
 L_z(y)=y^q+(z-2)y.
\]
It has a nonzero kernel precisely when $2-z$ is a $(q-1)$st power in $K$.
For a nonzero element, this is equivalent to norm one. When $2-z=0$,
the equation defining the kernel is $y^q=0$, so the kernel is zero. Our choice of $z$ therefore makes $L_z$ invertible.
Choose $y\in K$ solving
\[
 L_z(y)=\frac{w}{s}-s^q+s.
\]
By \eqref{eq:Gdifference}, $G(y+s)-G(y)=w$. Hence
\[
 u_q(y+s)-u_q(y)
 =s+w^p=0.
\]
Since $s\neq0$, this is a collision between distinct elements.
\end{proof}

\begin{remark}
The proof is genuinely uniform in the extension degree. It gives more than
nonexceptionality: no proper finite extension belongs to the spectrum.
\end{remark}

\subsection{The field of two elements}

For $q=2$, a particularly sparse exact example is
\begin{equation}\label{eq:f2}
 u_2(T)=T+T^2+T^6.
\end{equation}
It satisfies $u_2'=1$ and fixes $\F_2$ pointwise.

\begin{theorem}[Exact binary spectrum]\label{thm:f2exact}
For $u_2$ in \eqref{eq:f2},
\[
 \Specper_2(u_2)=\{1\}.
\]
\end{theorem}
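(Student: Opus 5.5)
The plan is to write down a collision explicitly, as in the proof of Theorem~\ref{thm:oddexact}, but with the difference equation reduced to an Artin--Schreier equation. Fix $K=\F_{2^m}$ with $m>1$ and look for a collision $u_2(x+s)=u_2(x)$ with $s\in K^\times$.

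\textbf{Step 1: reduce the difference equation.} Since we are in characteristic $2$, put $X=x^2$ and $S=s^2$. Then
\[
 (x+s)^6-x^6=(X+S)^3-X^3=X^2S+XS^2+S^3 .
\]
Hence
\[
 u_2(x+s)-u_2(x)=s+s^2+s^6+s^2x^4+s^4x^2 .
\]
Now write $x^2=s^2v$; this is legitimate because squaring is bijective on $K$. The collision condition becomes
\[
 s^6(v^2+v)=s+s^2+s^6,
 \qquad\text{that is,}\qquad
 v^2+v=t^5+t^4+1 \quad\text{with } t=s^{-1}.
\]
By the standard additive Hilbert~90 criterion, this is solvable for $v\in K$ exactly when
\[
 \Tr_{K/\F_2}(t^5+t^4+1)=0 .
\]
Using $\Tr(t^4)=\Tr(t)$ and $\Tr(1)\equiv m \pmod 2$, the condition reads
\[
 \Tr(t^5+t)\equiv m \pmod 2 .
\]
Any such $t\neq0$ produces $s=t^{-1}\neq0$, a solution $v$, and then $x=s\sqrt{v}$. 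This gives two distinct points $x$ and $x+s$ with the same image, so $m\notin\Specper_2(u_2)$. Since $u_2$ fixes $\F_2$, we also have $1\in\Specper_2(u_2)$.

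\textbf{Step 2: $m$ even.} Take $t=1$. Then $\Tr(t^5+t)=\Tr(0)=0\equiv m \pmod 2$, so the condition holds.

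\textbf{Step 3: $m$ odd, $m\geq3$.} This is the main obstacle. We need some $t\neq0$ with $\Tr(t^5+t)=1$. Since $t=0$ gives value $0$, it suffices to show that the function $t\mapsto\Tr(t^5+t)$ does not vanish identically on $K$. As a polynomial it equals
\[
 \sum_{i=0}^{m-1}\bigl(t^{5\cdot2^i}+t^{2^i}\bigr).
\]
Reduce the exponents modulo $2^m-1$, keeping them in $[1,2^m-1]$; this gives the reduced representative of the function. The exponents $5\cdot2^i$ reduce to cyclic rotations of the binary word $101$ in $m\geq3$ bits, so they have binary weight $2$. The exponents $2^i$ have weight $1$. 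Therefore the monomials $t^{2^i}$ survive with coefficient $1$ and cannot be cancelled by the weight-$2$ terms. The reduced representative, of degree $<2^m$, is thus a nonzero polynomial and cannot vanish on all of $K$. Hence a suitable $t$ exists.

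The only care needed is the bookkeeping of these exponent reductions, including checking that no rotation produces the exponent $2^m-1$; this is where I expect the argument to need the most attention.
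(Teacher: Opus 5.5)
Your proof is correct. The reduction is the paper's in different coordinates. The paper uses $s=x+y$, $t=xy$, $b=t/s^2$, $r=s^{-1}$ and solves $v^2+v=b$ with $b^2=1+r^4+r^5$. You substitute $x^2=s^2v$ and get $v^2+v=1+t^4+t^5$ with $t=s^{-1}$ directly, which is the square of the paper's equation. The resulting trace condition, and the choice $t=1$ for even $m$, are identical.

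The genuine difference is the odd case, i.e.\ showing that $\Tr(t^5+t)$ does not vanish identically. The paper uses the polar form $B(r,w)=\Tr(r^4w+rw^4)=\Tr\bigl(r(w^{2^{m-2}}+w^4)\bigr)$. By nondegeneracy of the trace pairing, $B\equiv0$ would force $w^{2^{m-2}}=w^4$ on $K$, hence $m\mid 4$. A function with nonzero polar form cannot be the zero function. You instead reduce modulo $t^{2^m}-t$. Multiplication by $2$ modulo $2^m-1$ rotates binary digits, so the exponents from $\Tr(t^5)$ have weight two. The monomials $t^{2^i}$ from $\Tr(t)$ therefore keep coefficient $1$, leaving a nonzero polynomial of degree $<2^m$.

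Your argument is more elementary and does not use the parity of $m$: it works for every $m\geq3$, including $m=4$, where the paper's polar form vanishes identically. The paper's argument stays within the language of quadratic forms over $\F_2$ and avoids computing a reduced representative.

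The bookkeeping you worried about is harmless. The exponent $2^m-1$ is itself an admissible reduced exponent, so hitting it would cause no trouble. All you need is that no weight-two rotation equals some $2^j$, and rotation-invariance of the binary weight guarantees this.
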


\begin{proof}
Let $K=\F_{2^m}$ with $m>1$, and write $\Tr=\Tr_{K/\F_2}$. We construct
distinct $x,y\in K$ with $u_2(x)=u_2(y)$.

Put $s=x+y$ and $t=xy$. In characteristic two,
\[
 x^2+y^2=s^2,
 \qquad
 x^6+y^6=(x^3+y^3)^2=(s^3+st)^2=s^6+s^2t^2.
\]
Thus a collision with $s\neq0$ is equivalent to
\begin{equation}\label{eq:tcondition}
 t^2=s^4+1+s^{-1}.
\end{equation}
Let $r=s^{-1}$ and $b=t/s^2$. Then \eqref{eq:tcondition} becomes
\[
 b^2=1+r^4+r^5,
\]
and therefore
\begin{equation}\label{eq:traceb}
 \Tr(b)=\Tr(b^2)=\Tr(1+r+r^5).
\end{equation}
We choose $r\neq0$ so that the last trace is zero.

If $m$ is even, take $r=1$. Then
$\Tr(1+r+r^5)=\Tr(1)=0$. Suppose now that $m$ is odd, so $m\geq3$, and define the quadratic map
\[
 Q(r)=\Tr(r^5+r).
\]
Here ``quadratic'' is meant in the characteristic-two sense: the linear term
is allowed, and the polar form below is bilinear. Its polar form is
\[
 B(r,w)=\Tr(r^4w+rw^4)
 =\Tr\bigl(r(w^{2^{m-2}}+w^4)\bigr).
\]
If $B$ were identically zero, nondegeneracy of the trace pairing would give
$w^{2^{m-2}}=w^4$ for every $w\in K$. The corresponding Frobenius
endomorphisms would be equal, forcing $m-2\equiv2\pmod m$ and hence
$m\mid4$, impossible for odd $m\geq3$. Thus $Q$ is not the zero function. Since $Q(0)=0$, there is
$r\neq0$ with $Q(r)=1$. Since $\Tr(1)=1$ for odd $m$, this gives
$\Tr(1+r+r^5)=0$.

Having chosen $r$, put $s=r^{-1}$. The squaring map is bijective on $K$, so
choose $t$ satisfying \eqref{eq:tcondition}; then $b=t/s^2$ has trace zero by
\eqref{eq:traceb}. The Artin--Schreier equation
\[
 v^2+v=b
\]
therefore has a solution $v\in K$. Indeed, the $\F_2$-linear map
$v\mapsto v^2+v$ has kernel $\F_2$, hence image of cardinality
$2^{m-1}$; its image is contained in the trace-zero hyperplane, which has
the same cardinality. Set
\[
 x=sv,
 \qquad y=s(v+1).
\]
Then $x+y=s\neq0$ and $xy=s^2b=t$, so the preceding calculation gives
$u_2(x)=u_2(y)$.
\end{proof}

\subsection{An elementary construction over every finite field}

The preceding two theorems cover all odd fields and $\F_2$ with exact
spectrum. The following construction gives a short elementary proof of
nonexceptionality for every remaining field as well.

For $q>2$, let $\ell_q$ denote the least prime divisor of $q-1$ and set
\begin{equation}\label{eq:vq}
 v_q(T)=T+\bigl(T^{\ell_q-1}(T^q-T)\bigr)^p.
\end{equation}

\begin{theorem}[Elementary uniform one-variable counterexamples]
\label{thm:clwfamily}
For every $q>2$, the polynomial $v_q$ is a mock automorphism over $\F_q$
and is nonexceptional. Its degree is
\[
 d_q=p(q+\ell_q-1),
\]
and its spectrum satisfies
\[
 \Specper_q(v_q)
 \subseteq\{m\geq1:q^m<d_q^4\}.
\]
For odd $q$, one has $\ell_q=2$, so $v_q=u_q$ and
$\Specper_q(v_q)=\{1\}$ by Theorem~\ref{thm:oddexact}.
\end{theorem}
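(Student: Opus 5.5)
The proof checks the mock-automorphism property and the degree directly, obtains nonexceptionality from the Carlitz--Lenstra--Wan theorem (Theorem~\ref{thm:clw}), and bounds the spectrum with Proposition~\ref{prop:effective}.

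\emph{Keller property, bijectivity on $\F_q$, and degree.} The perturbation is a $p$th power, so $v_q'(T)=1$ and $v_q$ is a Keller map. For $a\in\F_q$ we have $a^q-a=0$, so $v_q(a)=a$ and $v_q$ is bijective on $\F_q$; hence $v_q$ is a mock automorphism. The inner polynomial $T^{\ell_q-1}(T^q-T)$ has degree $q+\ell_q-1$. Its $p$th power therefore has degree $p(q+\ell_q-1)$, which exceeds $1$, so $\deg v_q=d_q$.

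\emph{Nonexceptionality.} The key congruence is
\[
 d_q\equiv \ell_q\pmod{q-1}.
\]
It holds because $q\equiv1\pmod{q-1}$ and $p^r=q\equiv 1$. More precisely, $p$ is a unit modulo $q-1$, and $q+\ell_q-1\equiv \ell_q$. Since $\ell_q\mid q-1$, the gcd $\gcd(d_q,q-1)$ equals $\gcd(p\,(q+\ell_q-1),q-1)$. We have $\gcd(p,q-1)=1$, so this equals $\gcd(q+\ell_q-1,q-1)=\gcd(\ell_q,q-1)=\ell_q>1$. Theorem~\ref{thm:clw} then shows that $v_q$ is not exceptional over $\F_q$. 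Here $q>2$ guarantees that $q-1>1$, so the prime $\ell_q$ exists.

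\emph{The spectrum bound.} Separability of $v_q$, in the sense required by Proposition~\ref{prop:effective}, follows from $v_q'=1$: it is not a polynomial in $T^p$, and $\Phi_{v_q}$ is the relevant object. Proposition~\ref{prop:effective} then gives $q^m<d_q^4$ for every $m\in\Specper_q(v_q)$.

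\emph{The odd case.} For odd $q$, $q-1$ is even, so $\ell_q=2$. Then $T^{\ell_q-1}=T$ and $v_q=u_q$, and Theorem~\ref{thm:oddexact} gives spectrum $\{1\}$.

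The only step needing care is the gcd computation, specifically that $p$ is invertible modulo $q-1$ so that the factor $p$ does not affect the gcd. I expect no real obstacle.
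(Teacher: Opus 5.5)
Your proof is correct and follows the paper's route. Derivative one and the identity on $\F_q$ give the mock property, $\gcd(d_q,q-1)>1$ with Theorem~\ref{thm:clw} gives nonexceptionality, and Proposition~\ref{prop:effective} gives the spectrum bound. The paper only notes that $\ell_q$ divides both $q-1$ and $q+\ell_q-1$, so the invertibility of $p$ modulo $q-1$, which you flag as the delicate point, is not needed.

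One harmless slip: your displayed ``key congruence'' should read $d_q\equiv p\ell_q\pmod{q-1}$, not $d_q\equiv\ell_q$. For example, $q=16$ gives $\ell_q=3$ and $d_q=36\equiv6\pmod{15}$. Your subsequent gcd computation does not depend on this congruence.
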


\begin{proof}
The derivative of $v_q$ is one, and $v_q(a)=a$ for every $a\in\F_q$.
Moreover,
\[
 \ell_q\mid(q-1)
 \quad\text{and}\quad
 \ell_q\mid(q+\ell_q-1),
\]
so
\[
 \gcd(d_q,q-1)\geq\ell_q>1.
\]
Theorem~\ref{thm:clw} shows that $v_q$ is nonexceptional. The spectrum bound
is Proposition~\ref{prop:effective}.
\end{proof}

\begin{remark}[The remaining even-field spectrum problem]
\label{rem:evenunknown}
When $q>2$ is even, Theorem~\ref{thm:clwfamily} proves that the spectrum is
finite and reduces its exact determination to finitely many extension
fields. We do not prove that it is $\{1\}$. Closing this point would require
either a collision construction valid in every extension, analogous to the
norm argument in odd characteristic, or an explicit factorization and
Frobenius analysis of the off-diagonal curve. The Carlitz--Lenstra--Wan
degree obstruction alone cannot determine the individual extension degrees.
\end{remark}

\subsection{Pointwise-identity degree optimality in one variable}

Let
\[
 \nu_q=
 \min\{\deg f:f\in\F_q[T],\ f'=1,\ f|_{\F_q}=\id,
                    \text{ and }f\text{ is nonexceptional}\}.
\]
This minimum exists by Theorems~\ref{thm:f2exact}, \ref{thm:oddexact}, and
\ref{thm:clwfamily}.

\begin{proposition}\label{prop:onepointwise}
One has
\[
 \nu_q\geq p(q+1).
\]
Equality holds for every odd $q$ and for $q=2$. For even $q>2$, the present
paper gives
\[
 p(q+1)\leq\nu_q\leq p(q+\ell_q-1).
\]
\end{proposition}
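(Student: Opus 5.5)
The lower bound is the one-variable case $n=1$ of Theorem~\ref{thm:normalizeddegree}, and the upper bounds come from the explicit constructions of this section. A polynomial $f\in\F_q[T]$ with $f'=1$ and $f|_{\F_q}=\id$ is exactly an element of $\mathcal N_{q,1}$. Nonexceptional means that $\Specper_q(f)$ is finite, so $f$ is not extension-exceptional. Theorem~\ref{thm:normalizeddegree} was stated for every $n\geq1$, so it gives $\deg f\geq p(q+1)$ directly.

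It is worth recalling why the argument there survives in one variable. Writing $f=T+H$ with $H'=0$, Lemma~\ref{lem:zeroderiv} gives $H=G^p$ with $G$ vanishing on $\F_q$. If $\deg f<p(q+1)$, then $\deg G\leq q$, and Lemma~\ref{lem:vanishing} forces $G=c(T^q-T)$. Hence $f=T+c^p(T^{pq}-T^p)$ is additive, and Lemma~\ref{lem:additiveexceptional} makes its spectrum infinite. Nothing in this chain used $n\geq2$; that hypothesis was needed only for the attainment statement. So $\nu_q\geq p(q+1)$ holds for all $q$.

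For equality, I exhibit explicit elements of the class of degree $p(q+1)$.

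\begin{itemize}
\item \textbf{Odd $q$.} Take $u_q$ from \eqref{eq:uq}. It satisfies $u_q'=1$ and fixes $\F_q$ pointwise. Its degree is $p\cdot(q+1)$, since $T(T^q-T)$ has degree $q+1$. It is nonexceptional because its spectrum is $\{1\}$ by Theorem~\ref{thm:oddexact}.
\item \textbf{$q=2$.} Take $u_2=T+T^2+T^6$ from \eqref{eq:f2}. Here $p(q+1)=2\cdot3=6=\deg u_2$. The conditions $u_2'=1$ and $u_2|_{\F_2}=\id$ were noted with \eqref{eq:f2}, and Theorem~\ref{thm:f2exact} gives spectrum $\{1\}$, so $u_2$ is nonexceptional.
\item \textbf{Even $q>2$.} The polynomial $v_q$ of \eqref{eq:vq} has derivative $1$, fixes $\F_q$, and is nonexceptional by Theorem~\ref{thm:clwfamily}. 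Its degree is $d_q=p(q+\ell_q-1)$, which gives the stated upper bound.
\end{itemize}

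No step is a real obstacle. The only point needing care is checking that $\deg G\leq q$ follows from $\deg G^p=p\deg G<p(q+1)$, which is immediate from strictness, and that the $n=1$ case of Theorem~\ref{thm:normalizeddegree} indeed covers this setting.
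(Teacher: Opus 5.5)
Your proposal is correct and follows essentially the same route as the paper. The paper's proof runs the same chain directly in one variable: Lemma~\ref{lem:zeroderiv} gives $f=T+G^p$, then $T^q-T\mid G$ forces $G=c(T^q-T)$, and the finite-kernel argument of Lemma~\ref{lem:additiveexceptional} finishes; this is exactly the $n=1$ case of Theorem~\ref{thm:normalizeddegree} that you cite and unpack, and your equality and upper-bound examples ($u_q$, $u_2$, $v_q$ via Theorems~\ref{thm:oddexact}, \ref{thm:f2exact}, \ref{thm:clwfamily}) are the ones the paper uses.
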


\begin{proof}
Since $f'-1=0$, Lemma~\ref{lem:zeroderiv} in one variable gives
\[
 f(T)=T+G(T)^p
\]
for some $G\in\F_q[T]$. The identity condition implies that $G$ vanishes on
$\F_q$, so $T^q-T$ divides $G$. If $\deg f<p(q+1)$, then
$\deg G\leq q$, whence $G=c(T^q-T)$. Thus $f$ is affine linearized. If it
permutes $\F_q$, the finite-kernel argument in
Lemma~\ref{lem:additiveexceptional} shows that it permutes infinitely many
extensions. Therefore a nonexceptional $f$ must have degree at least
$p(q+1)$. The equality examples follow from
Theorems~\ref{thm:oddexact} and \ref{thm:f2exact}.
\end{proof}

\section{Prime fields and exact least degrees}
\label{sec:minimal}

We now remove the pointwise-identity restriction and study the least possible
degree over $\F_p$.

\begin{definition}\label{def:delta}
For every prime $p$, let $\mu_p$ be the least degree of a nonexceptional
one-variable mock automorphism over $\F_p$. For $p\geq5$, let $\delta_p$ be
the least degree of a nonlinear reduced permutation polynomial over $\F_p$,
where ``reduced'' means the unique representative modulo $T^p-T$ of degree
at most $p-1$.
\end{definition}

The number $\delta_p$ exists because $T^{p-2}$ permutes $\F_p$ for $p\geq5$.
The argument in this section has three layers. Proposition~\ref{prop:mulower}
gives an elementary lower bound. Corollary~\ref{cor:classificationfree} and
Proposition~\ref{prop:cubiccurve} establish equality in broad families by
direct arguments. The uniform equality in Theorem~\ref{thm:muexact} uses the
classification theorem quoted as Theorem~\ref{thm:fgsdegrees}; the final
congruence calculations are applications of existing low-degree
permutation-polynomial classifications.

We begin with two elementary lemmas.

\begin{lemma}[Constant derivative and decomposition]\label{lem:decomp}
Let $k$ have characteristic $p$, and let $f\in k[T]$ have nonzero constant
derivative. If
\[
 f=A\circ B
\]
with $\deg A,\deg B>1$, then $p$ divides both $\deg A$ and $\deg B$. In
particular, if $p^2\nmid\deg f$, then $f$ is absolutely indecomposable.
\end{lemma}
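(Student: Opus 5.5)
The plan is to differentiate the composition and compare the derivatives of the two factors. Write $a=\deg A$, $b=\deg B$, both greater than $1$, and let $c=f'\in k^\times$. The chain rule gives
\[
 c=f'(T)=A'(B(T))\,B'(T).
\]
Since $c$ is a nonzero constant, both $A'(B(T))$ and $B'(T)$ are nonzero constants; a product of two polynomials is a unit only when both factors are units in $k[T]$. We then extract divisibility of $a$ and of $b$ separately.

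For $B$: $B'$ is a nonzero constant. If $p\nmid b$, the coefficient of $T^{b-1}$ in $B'$ is $b\cdot\operatorname{lc}(B)\neq0$, so $\deg B'=b-1\geq1$. This contradicts constancy, so $p\mid b$.

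For $A$: the composite $A'(B(T))$ is constant and $\deg B\geq 2>0$. Hence $A'$ itself is constant, because a nonconstant $A'$ of degree $r$ would give $A'\circ B$ of degree $rb>0$. Moreover $A'\neq0$, since $A'(B)\neq0$. Now the same leading-coefficient argument applied to $A$ with $a>1$ gives $p\mid a$.

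For the final claim, decomposability over $\overline{k}$ would give $\deg f=ab$ with $p\mid a$ and $p\mid b$, hence $p^2\mid\deg f$. The derivative of $f$ is unchanged by passing to $\overline{k}$, so the first part applies to decompositions over the algebraic closure. This gives absolute indecomposability when $p^2\nmid\deg f$. I do not expect a genuine obstacle; the only care needed is the observation that a unit product forces both factors to be units, and that $A'\circ B$ constant with $B$ nonconstant forces $A'$ constant.
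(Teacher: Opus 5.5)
Your proof is correct and follows essentially the same argument as the paper: the chain rule forces $B'$ and $A'\circ B$ to be nonzero constants, so $A'$ is constant, and a nonlinear polynomial with constant derivative has degree divisible by $p$. The absolute statement then follows because the argument is unchanged over $\overline{k}$; you simply make explicit the leading-coefficient and degree-of-composition steps that the paper leaves implicit.
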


\begin{proof}
The chain rule gives
\[
 f'=(A'\circ B)B'.
\]
A product of two polynomials is a nonzero constant only if both factors are
nonzero constants. Thus $B'$ is constant, and $A'\circ B$ constant implies
that $A'$ is constant. A nonlinear polynomial with constant derivative in
characteristic $p$ has degree divisible by $p$. The argument is unchanged
after extending the coefficient field.
\end{proof}

\begin{lemma}[Linearized base permutations are exceptional]
\label{lem:linearizedexceptional}
Let
\[
 f(T)=a_0+a_1T+a_pT^p+\cdots+a_{p^r}T^{p^r}\in\F_p[T]
\]
be affine linearized. If $f$ permutes $\F_p$, then it permutes
$\F_{p^m}$ for infinitely many $m$.
\end{lemma}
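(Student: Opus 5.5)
The proof follows the pattern of Lemma~\ref{lem:additiveexceptional}. The map $L(T)=f(T)-a_0$ is an additive polynomial over $\F_p$. On the points of every extension $\F_{p^m}$ it induces an $\F_p$-linear endomorphism, and $f$ is just $L$ followed by the translation by $a_0\in\F_p$. Translations are bijections on every $\F_{p^m}$. Hence $f$ permutes $\F_{p^m}$ exactly when $\ker L(\F_{p^m})=\{0\}$, and the task is to produce infinitely many $m$ for which this kernel is trivial.

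The first step is to check that the kernel is finite. If $L$ is not the zero polynomial, its roots in $\overline{\F}_p$ form a finite set; in fact they form a finite additive subgroup. The zero polynomial is excluded, because then $f$ would be constant and could not permute $\F_p$. Note that we cannot appeal to étaleness here: $a_1$ may vanish, so the map need not be Keller. We only use that a nonzero polynomial has finitely many roots. The hypothesis that $f$ permutes $\F_p$ says that $L$ is injective on $\F_p$, so no nonzero root of $L$ lies in $\F_p$.

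Let $d_1,\ldots,d_s>1$ be the degrees over $\F_p$ of the nonzero roots of $L$. If $L$ has no nonzero roots, then $f$ is bijective on every $\F_{p^m}$ and we are done. Otherwise take any prime $m>\max_i d_i$. No $d_i$ divides $m$, so no nonzero root lies in $\F_{p^m}$. Thus $L$ is injective on the finite set $\F_{p^m}$, hence bijective, and so is $f$. There are infinitely many such primes, which gives infinitely many $m$.

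There is no real obstacle. The only point needing care is that the Keller hypothesis is unavailable, so finiteness of the kernel must come from $L\neq0$, which follows from $f$ permuting $\F_p$ (for $p\geq2$ a constant map is not a permutation).
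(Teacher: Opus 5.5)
Your proposal is correct and matches the paper's proof essentially step for step. You remove the translation by $a_0$, note that $f$ permuting $\F_p$ forces the additive part $L$ to be nonzero (so its geometric kernel is finite, with no separability or Keller hypothesis needed) and to have no nonzero root in $\F_p$, and then take primes $m$ larger than the degrees of the nonzero roots, exactly as the paper does.
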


\begin{proof}
Translations do not affect injectivity, so consider the additive part $L$.
Since $f$ permutes $\F_p$, the polynomial $L$ is nonzero. Hence its set of
roots in $\overline{\F}_p$ is finite, with no separability hypothesis
needed. No nonzero root lies in $\F_p$, because $L$ is injective there.
If there are no nonzero geometric roots, then $L$ is injective on every
finite extension. Otherwise, let $d_1,\ldots,d_s>1$ be the degrees of the
minimal fields of definition of the nonzero roots. Every prime integer
$m>\max_i d_i$ is divisible by none of the $d_i$, so $L$ has zero kernel on
$\F_{p^m}$ and is therefore bijective on that finite field.
\end{proof}

The following result gives a direct nonexceptionality proof for the cubic
lift used in the sharpest infinite family of prime-field examples.

\begin{proposition}[An explicit collision curve]\label{prop:cubiccurve}
Let $p\geq5$ and
\[
 c_p(T)=T+(T^3-T)^p\in\F_p[T].
\]
Then $c_p$ is nonexceptional. Moreover, it induces the function
$T\mapsto T^3$ on $\F_p$; hence it is a mock automorphism precisely when
$p\equiv2\pmod3$.
\end{proposition}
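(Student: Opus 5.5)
The proof has three parts: the Keller property and the base-field function, the criterion for being a mock automorphism, and nonexceptionality. The first two are quick. Since $c_p'(T)=1$, the polynomial $c_p$ is Keller. For $a\in\F_p$ we have $(a^3-a)^p=a^3-a$, so
\[
 c_p(a)=a+a^3-a=a^3 .
\]
Hence $c_p$ induces $T\mapsto T^3$ on $\F_p$. This map is bijective on $\F_p$ exactly when $\gcd(3,p-1)=1$, that is, when $p\equiv2\pmod3$, because $p\geq5$ excludes $p=3$. This settles the mock-automorphism criterion.

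For nonexceptionality I will not rely on Carlitz--Lenstra--Wan. The degree $3p$ satisfies $\gcd(3p,p-1)=\gcd(3,p-1)$, which equals $1$ exactly in the interesting case $p\equiv 2\pmod 3$, so Theorem~\ref{thm:clw} gives nothing there. Instead I will exhibit an absolutely irreducible factor of
\[
 \Phi(X,Y)=\frac{c_p(X)-c_p(Y)}{X-Y}
\]
defined over $\F_p$. The polynomial $c_p$ is separable since $c_p'=1$, so by the criterion recalled before Theorem~\ref{thm:clw} such a factor implies that $c_p$ is nonexceptional. Write $G(T)=T^3-T$. Then
\[
 c_p(X)-c_p(Y)=(X-Y)+\bigl(G(X)-G(Y)\bigr)^p
 =(X-Y)\Bigl(1+(X-Y)^{p-1}\bigl(X^2+XY+Y^2-1\bigr)^p\Bigr),
\]
so
\[
 \Phi=1+(X-Y)^{p-1}Q^p, \qquad Q=X^2+XY+Y^2-1 .
\]
Next I pass to coordinates $s=X-Y$ and $v=X+Y$. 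One has
\[
 X^2+XY+Y^2=\frac{3v^2+s^2}{4},
\]
so the collision curve becomes
\[
 C:\ 1+s^{p-1}\Bigl(\frac{3v^2+s^2}{4}-1\Bigr)^p=0 .
\]
Setting $W=\bigl(\frac{3v^2+s^2}{4}-1\bigr)s$, the equation reads $1+s^{-1}W^p=0$, equivalently
\[
 s=-W^p .
\]

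The key step, and the one I expect to be the main obstacle, is proving that $\Phi$ itself is absolutely irreducible. The natural approach is to view $\Phi$ as a polynomial in $v$ over $\overline{\F}_p(s)$. In that variable it equals $1+s^{p-1}R(v^2)^p$, where $R$ is linear in $v^2$ with coefficients in $\overline{\F}_p(s)$. Since
\[
 R^p=\bigl(R(v^2)\bigr)^p,
\]
irreducibility reduces to asking whether $-s^{1-p}$ is a $p$th power in the field generated by $v$; this I will handle as an Artin--Schreier/purely inseparable check. Concretely, $\Phi=1+s^{p-1}R^p$ vanishes iff
\[
 R^p=-s^{1-p} ,
\]
so $R$ must equal the unique $p$th root of $-s^{1-p}$. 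That gives
\[
 \frac{3v^2+s^2}{4}-1=\alpha\, s^{(1-p)/p},
\]
which is not over $\overline{\F}_p(s)$, so I instead parametrize by $W$. Set $t=s^{1/p}$ formally, or better take $W$ as a parameter. From $s=-W^p$ one gets
\[
 \frac{3v^2}{4}=\frac{W}{s}+1-\frac{s^2}{4}=1-W^{1-p}-\frac{W^{2p}}{4},
\]
so $C$ is birational to
\[
 v^2=\tfrac43\bigl(1-W^{1-p}\bigr)-\tfrac{W^{2p}}{3}
\]
over $\overline{\F}_p(W)$. This is a double cover of the $W$-line, and it is irreducible because the right-hand side is not a square in $\overline{\F}_p(W)$. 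That holds because, after clearing $W^{p-1}$ (an even power, since $p$ is odd), the numerator $4W^{p-1}-4-W^{3p-1}$ is not a square: it is separable up to the obvious check that its derivative $-W^{3p-2}-4W^{p-2}$ shares no root with it. One must also check that the map $(s,v)\mapsto(W,v)$ is birational on $C$, which follows since $s$ is recovered as $-W^p$. Because this curve is defined over $\F_p$, $\Phi$ has an absolutely irreducible factor over $\F_p$, and $c_p$ is nonexceptional.
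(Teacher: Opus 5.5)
Your proposal is correct and reaches the same curve as the paper by a somewhat more direct route. Your coordinates $(W,v)$ are the paper's $(-t,\,2y+t^p)$. So your equation $3W^{p-1}v^2=4W^{p-1}-4-W^{3p-1}=-R(W)$ is the paper's quadratic $Q(t,y)=0$ with the square completed, and the arithmetic core, squarefreeness of $R$, is identical.

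The difference lies in how the curve is tied to $\Phi$. The paper starts from $C$ and proves it absolutely irreducible via the discriminant and Gauss's lemma. It then needs a commutative-algebra step to extract one absolutely irreducible factor $H$ of $\Phi$:
\begin{enumerate}
\item $\F_p[y,y+t^p]$ is the whole coordinate ring of $C$;
\item the kernel is a height-one prime $(H)$;
\item $H$ is not associated with $X-Y$, because $X-Y$ is a unit on the image.
\end{enumerate}
You instead divide out $X-Y$ at the start, writing $\Phi=1+s^{p-1}Q^p$. The polynomial maps $(s,v)\mapsto(sQ,v)$ and $(W,v)\mapsto(-W^p,v)$ then identify the whole zero locus of $\Phi$ with your double cover. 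This gives the stronger statement that $\Phi=0$ is a single geometrically irreducible curve, and there is no diagonal factor to exclude.

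Some repairs are needed.
\begin{enumerate}
\item Call the correspondence an isomorphism rather than ``birational,'' and check the reverse direction explicitly. If $W\neq0$ satisfies your equation and $s=-W^p$, then $\frac{3v^2+s^2}{4}-1=-W^{1-p}$. Hence $sQ=W$ and $\Phi=1+(-W^p)^{p-1}(-W^{1-p})^p=0$. It is this two-sided pointwise inverse, not generic birationality, that transfers irreducibility to $V(\Phi)$.
\item Galois stability of the reduced equation then yields a factor defined over $\F_p$. In fact $\Phi$ is squarefree: differentiating $c_p(X)-c_p(Y)=(X-Y)\Phi$ in $X$ gives $(X-Y)\partial_X\Phi=1$ on $\Phi=0$. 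So $\Phi$ itself is absolutely irreducible.
\item Your derivative is wrong. In characteristic $p$ the derivative of $4W^{p-1}-4-W^{3p-1}$ is $W^{p-2}(W^{2p}-4)$. At a nonzero common root one would have $W^{2p}=4$, where the numerator equals $-4$; its value at $W=0$ is also $-4$. Carry out this check rather than calling it obvious.
\item Delete the paragraph about $p$th roots; as written it is confused. That route can in fact be completed. As a polynomial in $v$, $\Phi=\tfrac34 s^{p-1}v^{2p}+\tfrac14R(s)$ is a primitive binomial over $\overline{\F}_p[s]$, because $R(0)=4\neq0$. The ratio $-R(s)/(3s^{p-1})$ is neither a square nor a $p$th power in $\overline{\F}_p(s)$, so Capelli's criterion gives absolute irreducibility directly.
\end{enumerate}
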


\begin{proof}
For $a\in\F_p$ one has
\[
 c_p(a)=a+(a^3-a)^p=a^3,
\]
which proves the assertion about the base-field function. It remains to
prove nonexceptionality.

Consider the affine curve $C$ over $\F_p$ with coordinates $(t,y)$ defined
by
\begin{equation}\label{eq:cubiccurve}
 Q(t,y):=
 3t^{p-1}y^2+3t^{2p-1}y+t^{3p-1}-t^{p-1}+1=0.
\end{equation}
As a quadratic in $y$ over $\overline{\F}_p(t)$, its discriminant is
\[
 -3t^{p-1}R(t),
 \qquad R(t)=t^{3p-1}-4t^{p-1}+4.
\]
The factor $t^{p-1}$ is a square. Moreover,
\[
 R'(t)=t^{p-2}(4-t^{2p}).
\]
If a nonzero root of $R$ were also a root of $R'$, then $t^{2p}=4$ and
\[
 R(t)=t^{p-1}(t^{2p}-4)+4=4,
\]
a contradiction; also $R(0)=4$. Hence $R$ is squarefree and nonconstant,
so the discriminant is not a square in $\overline{\F}_p(t)$. The
quadratic polynomial $Q$ is primitive in $\overline{\F}_p[t][y]$, since its
constant coefficient is congruent to $1$ modulo $t$. It is therefore
irreducible over $\overline{\F}_p(t)$ by the discriminant criterion and
absolutely irreducible in $\overline{\F}_p[t,y]$ by Gauss's lemma.

Set $x=y+t^p$. Multiplying \eqref{eq:cubiccurve} by $t$ gives
\[
 (y+t^p)^3-y^3-t^p+t=0,
\]
so
\[
 (x^3-x)-(y^3-y)=-t.
\]
Raising to the $p$th power and using $x-y=t^p$ yields
\[
 c_p(x)-c_p(y)=0.
\]
We now justify scheme-theoretically that this parametrizes an absolutely
irreducible off-diagonal component.

Let
\[
 B=\F_p[t,y]/(Q),
\]
and consider the homomorphism
\[
 \varphi:\F_p[X,Y]\longrightarrow B,
 \qquad X\longmapsto y+t^p,\qquad Y\longmapsto y.
\]
Put
\[
 A=\operatorname{im}(\varphi)=\F_p[y,y+t^p]\subseteq B,
 \qquad P=\ker(\varphi).
\]
Then $\operatorname{Spec}A$ is the scheme-theoretic image of $C$ in
$\A^2_{X,Y}$. Write
\[
 S=3y^2+3t^py+t^{2p}-1\in A.
\]
The equation $Q=0$ gives $1=-t^{p-1}S$ in $B$. Multiplying by $t$ yields
\[
 t=-t^pS\in A,
\]
because $t^p=X-Y\in A$. Hence $A=B$. In particular, $A$ is a
domain of dimension one, and for every extension $K/\F_p$ the ring
$A\otimes_{\F_p}K=B\otimes_{\F_p}K$ is a domain because $Q$ is absolutely
irreducible. Thus the image is a closed geometrically integral curve.
Moreover, $t$ is a unit in $A$, with inverse $-t^{p-2}S$, so
$X-Y=t^p$ is a unit on the image.

The prime ideal $P$ has height one in the UFD $\F_p[X,Y]$, and hence
$P=(H)$ for an irreducible polynomial $H\in\F_p[X,Y]$. Geometric
integrality of $A$ implies that $H$ is absolutely irreducible. The preceding
collision identity shows that
\[
 c_p(X)-c_p(Y)\in P,
\]
so $H$ divides $c_p(X)-c_p(Y)$. Since $X-Y$ is a unit modulo $P$, the
polynomial $H$ is not associated with $X-Y$. Therefore
\[
 c_p(X)-c_p(Y)=(X-Y)\Phi_{c_p}(X,Y)
\]
implies $H\mid\Phi_{c_p}(X,Y)$. Finally, $c_p'(T)=1$, so the standard
separable collision-polynomial criterion applies. Consequently $c_p$ is
nonexceptional.
\end{proof}

We first isolate the elementary lower bound.

\begin{proposition}[Elementary lower bounds]\label{prop:mulower}
One has
\[
 \mu_2\geq6,\qquad \mu_3\geq12,
 \qquad \mu_p\geq p\delta_p\quad(p\geq5).
\]
\end{proposition}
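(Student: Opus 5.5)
Let $f\in\F_p[T]$ be a nonexceptional mock automorphism over $\F_p$ of degree $\mu_p$. The plan is to bound $\deg f$ from below by analysing its shape.

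\textbf{Step 1: the structure of $f$.} Since $f'$ is a nonzero constant $c$, Lemma~\ref{lem:zeroderiv} applied to $f-cT$ gives
\[
 f=cT+G^p+a_0 .
\]
We may absorb constants into $G$ and, over the prime field, take $G^p=G(T^p)$, because the coefficients of $G$ lie in $\F_p$. Reduce $G$ modulo $T^p-T$ and write
\[
 G=r+(T^p-T)M,\qquad \deg r\leq p-1 .
\]
Since $a^p=a$ on $\F_p$, the function induced by $f$ on $\F_p$ is $a\mapsto ca+r(a)$. This function is a permutation because $f$ is a mock automorphism. Put $P=cT+r$, reduced, of degree at most $p-1$ when $p>2$.

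\textbf{Step 2: the case where $P$ is linear.} Here I show that $\deg G\geq p+1$.
\begin{itemize}
\item If $M=0$, then $G=r$ is linear, so $f$ is affine linearized. Lemma~\ref{lem:linearizedexceptional} then makes $f$ exceptional, which is excluded.
\item If $M$ is a nonzero constant, then $G=r+\alpha(T^p-T)$ with $r$ linear, so $f$ is again affine linearized and is excluded for the same reason.
\end{itemize}
Hence $\deg M\geq1$ and $\deg G\geq p+1$, which gives $\deg f\geq p(p+1)$. This is at least $p\delta_p$, since $\delta_p\leq p-2$.

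\textbf{Step 3: the case where $P$ is nonlinear, $p\geq5$.} Now $P$ is a nonlinear reduced permutation polynomial, so $\deg P\geq\delta_p$. The degree $\deg P$ equals $\deg r$, because $c$ only affects the linear term and $\deg r\geq2$. Then
\[
 \deg G\geq\deg r\geq\delta_p,\qquad \deg f=p\deg G\geq p\delta_p .
\]
This step also uses that $\deg f=p\deg G>1$ when $\deg G\geq1$.

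\textbf{Step 4: $p=2$.} Every function on $\F_2$ is induced by a reduced polynomial of degree at most $1$, so $r$ is linear and Step~2 applies: $\deg G\geq3$ and $\deg f\geq6$.

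\textbf{Step 5: $p=3$.} This is the main obstacle. Reduced permutations of $\F_3$ have degree at most $2$, and a degree-$2$ reduced polynomial $aT^2+bT+d$ is never a permutation of $\F_3$. So $r$ is linear, and Step~2 only gives $\deg G\geq4$, that is $\deg f\geq12$. That is exactly the bound claimed, so the case closes; the only thing to verify is that the degree-$3$ possibilities for $G$ are all excluded. These are
\[
 G=r+\alpha(T^3-T),
\]
which are affine linearized and are excluded by Lemma~\ref{lem:linearizedexceptional}.

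The delicate points are twofold. First, the bookkeeping of the linear term $cT$ against $r$. Second, checking that "linearized" in Lemma~\ref{lem:linearizedexceptional} really covers
\[
 cT+\bigl(r+\alpha(T^p-T)\bigr)^p ,
\]
which it does, since this polynomial has only the terms $1$, $T$, $T^p$ and $T^{p^2}$.
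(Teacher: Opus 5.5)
Your proof is correct and uses the same ingredients as the paper's: Lemma~\ref{lem:zeroderiv} to write $f=cT+G^p$, the observation that $f$ induces $cT+G$ on $\F_p$, the minimality of $\delta_p$, and Lemma~\ref{lem:linearizedexceptional} for the affine linearized case. The difference is only organizational. The paper argues contrapositively, so $\deg f<p\delta_p$ already forces $\deg H<p$ and no reduction modulo $T^p-T$ is needed, and it handles $p=2,3$ by explicit computation. Your direct split via $G=r+(T^p-T)M$ covers $p=2,3$ uniformly with the linear-permutation case, and it gives the by-product $\deg f\geq p(p+1)$ whenever the induced permutation is linear.
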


\begin{proof}
Let first $p\geq5$, and let $f$ be a mock automorphism of degree less than
$p\delta_p$. Write $f'=c\in\F_p^\times$. By
Proposition~\ref{prop:invariance}, replacing $f$ by
$c^{-1}(f-f(0))$ preserves the spectrum and the degree; hence we may assume
$f'=1$ and $f(0)=0$. Then
\[
 f(T)=T+H(T)^p
\]
for some $H\in\F_p[T]$. The degree bound gives $\deg H<\delta_p<p$. On
$\F_p$, the polynomial $f$ induces the same function as $T+H(T)$. Since this
is a reduced permutation polynomial of degree less than $\delta_p$, it must
be affine. Hence $H$ is affine, and $f$ is affine linearized. By
Lemma~\ref{lem:linearizedexceptional}, it is exceptional.

For $p=2$, after normalizing the derivative, every polynomial of degree less
than six is of the form $T+H(T)^2$ with $\deg H\leq2$. Writing
$H=aT^2+bT+c$ shows explicitly that
\[
 T+H(T)^2=T+a^2T^4+b^2T^2+c^2,
\]
so the polynomial is affine linearized and exceptional by
Lemma~\ref{lem:linearizedexceptional}.
For $p=3$, a normalized polynomial of degree less than twelve is
$T+H(T)^3$ with $\deg H\leq3$. Modulo $T^3-T$, the base-field function
$T+H(T)$ has degree at most two. A quadratic polynomial over an odd field
cannot be a permutation, so the reduced function is affine. If
$H=aT^3+bT^2+cT+d$, this forces $b=0$; hence
$T+H(T)^3$ is again affine linearized and exceptional.
\end{proof}

There are two broad situations in which equality follows without any
classification of exceptional polynomials.

\begin{corollary}[Classification-free equality cases]
\label{cor:classificationfree}
Let $p\geq5$.
\begin{enumerate}
\item If $p\equiv2\pmod3$, then $\mu_p=3p$.
\item If $\gcd(\delta_p,p-1)>1$, then $\mu_p=p\delta_p$.
\end{enumerate}
\end{corollary}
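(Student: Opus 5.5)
Both parts will come from combining the lower bound of Proposition~\ref{prop:mulower} with an explicit construction of the matching degree, whose nonexceptionality is proved without any classification.

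\emph{Part (1).} Let $p\equiv2\pmod3$ and $p\geq5$. Then $\gcd(3,p-1)=1$, so $T^3$ permutes $\F_p$ and is a nonlinear reduced permutation polynomial of degree $3$. Quadratics never permute an odd field, so $\delta_p=3$. Proposition~\ref{prop:mulower} then gives $\mu_p\geq3p$. For the matching upper bound we use $c_p(T)=T+(T^3-T)^p$, which has degree $3p$ and derivative $1$. Proposition~\ref{prop:cubiccurve} shows that it induces $T\mapsto T^3$ on $\F_p$, which is a permutation in this case, so $c_p$ is a mock automorphism. The same proposition shows that $c_p$ is nonexceptional. Hence $\mu_p=3p$.

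\emph{Part (2).} Let $\pi$ be a nonlinear reduced permutation polynomial over $\F_p$ of degree $\delta_p$. Put $H(T)=\pi(T)-T$. Since $\delta_p\geq2$, we have $\deg H=\delta_p$. Define
\[
 f(T)=T+H(T)^p .
\]
Then $f'=1$, and on $\F_p$ the polynomial $f$ induces the same function as $T+H(T)=\pi(T)$, because $a^p=a$ there. So $f$ is a mock automorphism of degree $p\delta_p$. Its degree satisfies $\gcd(p\delta_p,p-1)=\gcd(\delta_p,p-1)>1$, using $\gcd(p,p-1)=1$. By Theorem~\ref{thm:clw}, $f$ is therefore nonexceptional, so $\mu_p\leq p\delta_p$. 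Combined with Proposition~\ref{prop:mulower}, this gives $\mu_p=p\delta_p$.

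The only delicate point is in part (1). The Carlitz--Lenstra--Wan argument cannot be used there, since $\gcd(3p,p-1)=1$, and this is why the explicit collision curve of Proposition~\ref{prop:cubiccurve} is needed. That proposition is already established, so the remaining work is only the short bookkeeping above: $\delta_p=3$, and the degree and base-field function of the lifted polynomial.
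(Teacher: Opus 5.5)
Your proposal is correct and follows the paper's proof essentially step for step. In part (1) you deduce $\delta_p=3$ and take the cubic lift $c_p$ from Proposition~\ref{prop:cubiccurve}, and in part (2) you use the lift $T+(P(T)-T)^p$ together with Theorem~\ref{thm:clw}; in both parts the lower bound comes from Proposition~\ref{prop:mulower}. Your extra remark that $\gcd(3p,p-1)=1$ blocks the Carlitz--Lenstra--Wan argument in part (1) correctly explains why the collision curve is needed there.
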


\begin{proof}
In the first case, no quadratic polynomial permutes an odd field and
$T^3$ is a permutation of $\F_p$. Hence $\delta_p=3$, and
Proposition~\ref{prop:cubiccurve} gives a degree-$3p$ nonexceptional mock
automorphism. In the second case, let $P$ be a reduced permutation polynomial
of degree $\delta_p$ and set
\begin{equation}\label{eq:liftP}
 f_P(T)=T+(P(T)-T)^p.
\end{equation}
This polynomial has derivative one, induces $P$ on $\F_p$, and has degree
$p\delta_p$. Since
\[
 \gcd(p\delta_p,p-1)=\gcd(\delta_p,p-1)>1,
\]
Theorem~\ref{thm:clw} shows directly that $f_P$ is nonexceptional. The lower
bounds come from Proposition~\ref{prop:mulower}.
\end{proof}

For the remaining cases we use the following precise consequence of the
Fried--Guralnick--Saxl classification.

\begin{theorem}[Degree alternatives for indecomposable exceptional
polynomials]\label{thm:fgsdegrees}
Let $k$ be a finite field of characteristic $p$, and let $f\in k[T]$ be
separable, indecomposable over $k$, and exceptional of degree $d>1$. Then
one of the following holds:
\begin{enumerate}
\item $d$ is prime and $d\neq p$;
\item $d=p^e$ for some $e\geq1$;
\item $p\in\{2,3\}$ and
$d=p^e(p^e-1)/2$ for some odd $e>1$.
\end{enumerate}
In particular, when $p\geq5$, only the first two alternatives occur.
\end{theorem}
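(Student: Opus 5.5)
This is a quotation of the Fried--Guralnick--Saxl classification rather than a new result, so the plan is to assemble it from the literature rather than reprove it.

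\emph{Step 1: reduction to primitive groups.} Let $f$ be separable, indecomposable over $k$, exceptional, and of degree $d>1$. Let $\Omega=k(x)$ and $k(f(x))\subseteq\Omega$, with Galois closure having arithmetic monodromy group $A$ and geometric monodromy group $G\trianglelefteq A$. Indecomposability over $k$ means $A$ acts primitively on the $d$ roots of $f(X)-t$. Exceptionality, in the form of the absolute-irreducibility criterion for $\Phi_f$ recalled in Section~\ref{sec:spectra}, translates into Fried's condition: for some coset of $G$ generating $A/G$, every element fixes exactly one point. The inertia group at infinity is cyclic-by-$p$ and contains a $d$-cycle up to wild ramification. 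This places $(G,A)$ in the setting of Fried--Guralnick--Saxl.

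\emph{Step 2: invoke the classification.} Fried, Guralnick and Saxl use Aschbacher--O'Nan--Scott and the classification of finite simple groups to show the following. Either $A$ is affine, so $d=p^e$ when $G$ is a $p$-group extension, or $d$ is a prime $\neq p$ giving tame cyclic or dihedral cases. Otherwise $p\in\{2,3\}$ and $G$ is almost simple with socle $\mathrm{PSL}_2(p^e)$ acting on $p^e(p^e-1)/2$ points, with $e$ odd and $e>1$. I would cite \cite{FriedGuralnickSaxl1993}, Theorem~13.6 and its corollary listing degrees, with the prime-degree alternative coming from their analysis of tame ramification. For $p\geq5$ the third alternative is vacuous by its own hypothesis, which gives the ``in particular''.

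\emph{Main obstacle.} The delicate point is matching hypotheses. Their results are stated for indecomposability over $k$ together with exceptionality, where $d$ prime to $p$ gives Burnside/Schur. The prime alternative should exclude $d=p$, which instead falls under $p^e$. I would double-check that the separability hypothesis, which is needed for the monodromy formalism, is exactly what the paper assumes. No computation beyond bookkeeping of cases is needed.
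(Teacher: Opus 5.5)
Your proposal is correct and takes the same route as the paper: the paper gives no argument beyond citing the degree part of the Fried--Guralnick--Saxl classification \cite{FriedGuralnickSaxl1993}, with the hypotheses taken from \cite[Theorem~1.1]{GuralnickRosenbergZieve2010} (indecomposability over $k$, which is weaker than the absolute indecomposability later supplied by Lemma~\ref{lem:decomp}). Your monodromy sketch is background only; if you keep it, say that the inertia group at infinity is transitive and $p$-by-cyclic (not cyclic-by-$p$), and that it is generated by a $d$-cycle exactly in the tame case $p\nmid d$.
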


\begin{proof}[Reference]
This is the degree part of the Fried--Guralnick--Saxl theorem
\cite{FriedGuralnickSaxl1993}, stated with its hypotheses in
\cite[Theorem~1.1]{GuralnickRosenbergZieve2010}. The theorem concerns
indecomposability over the ground field; absolute indecomposability, which
is what Lemma~\ref{lem:decomp} yields below, is stronger.
\end{proof}

\begin{theorem}[Classification-dependent exact prime-field minimum]\label{thm:muexact}
The least degree $\mu_p$ of a nonexceptional one-variable mock automorphism
over $\F_p$ is
\[
 \mu_2=6,\qquad \mu_3=12,\qquad
 \mu_p=p\delta_p\quad(p\geq5).
\]
For $p\geq5$, if $P$ is any reduced nonlinear permutation polynomial of
least degree $\delta_p$, then the polynomial $f_P$ in
\eqref{eq:liftP} is a degree-$p\delta_p$ nonexceptional mock automorphism.
\end{theorem}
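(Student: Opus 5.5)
The plan is to combine the elementary lower bounds of Proposition~\ref{prop:mulower} with explicit examples meeting them. For $p=2$ and $p=3$ the examples are already available. The binary polynomial $u_2$ in \eqref{eq:f2} has degree $6$, derivative $1$, and spectrum $\{1\}$ by Theorem~\ref{thm:f2exact}. The polynomial $u_3$ in \eqref{eq:uq} has degree $3(3+1)=12$, derivative $1$, and spectrum $\{1\}$ by Theorem~\ref{thm:oddexact}. Both are therefore nonexceptional mock automorphisms over the prime field. Combined with $\mu_2\geq6$ and $\mu_3\geq12$, this gives $\mu_2=6$ and $\mu_3=12$.

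For $p\geq5$, it suffices to prove the second assertion, since Proposition~\ref{prop:mulower} gives $\mu_p\geq p\delta_p$. Let $P$ be a reduced nonlinear permutation polynomial of degree $\delta_p$. Because $P$ is reduced, $\delta_p\leq p-1$. Because $P$ is nonlinear, $\delta_p\geq2$, so $P(T)-T$ also has degree $\delta_p$. Hence $f_P$ has degree exactly $p\delta_p$ and derivative $1$, so it is Keller and separable. On $\F_p$ it induces $T+(P(T)-T)=P(T)$, because $a^p=a$ there. Thus $f_P$ is a mock automorphism, and only nonexceptionality remains.

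For nonexceptionality I would argue by contradiction using the classification. Since $2\leq\delta_p\leq p-1$, we have $p\nmid\delta_p$, and hence $p^2\nmid p\delta_p=\deg f_P$. Lemma~\ref{lem:decomp} then shows that $f_P$ is absolutely indecomposable, and in particular indecomposable over $\F_p$. If $f_P$ were exceptional, Theorem~\ref{thm:fgsdegrees} with $p\geq5$ would force one of two alternatives:
\begin{enumerate}
\item $p\delta_p$ is a prime different from $p$, which is impossible since $p\mid p\delta_p$ and $p\delta_p>p$;
\item $p\delta_p=p^e$, which forces $\delta_p$ to be a power of $p$ lying strictly between $1$ and $p$, again impossible.
\end{enumerate}
Hence $f_P$ is nonexceptional, which yields $\mu_p\leq p\delta_p$ and so $\mu_p=p\delta_p$.

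I expect no serious obstacle, because the deep input is packaged in Theorem~\ref{thm:fgsdegrees}. The step needing the most care is checking that its hypotheses hold. Separability follows from $f_P'=1$. Indecomposability over the ground field is implied by the absolute indecomposability that Lemma~\ref{lem:decomp} provides. It also matters that $\delta_p$ is strictly less than $p$, which is where the word \emph{reduced} in Definition~\ref{def:delta} is used.
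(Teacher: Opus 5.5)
Your proposal is correct and matches the paper's proof essentially step for step. The lower bounds come from Proposition~\ref{prop:mulower}, equality for $p=2,3$ comes from $u_2$ and $u_3$ via Theorems~\ref{thm:f2exact} and~\ref{thm:oddexact}, and for $p\geq5$ you show $f_P$ is nonexceptional by combining $1<\delta_p<p$ with Lemma~\ref{lem:decomp} and the degree alternatives of Theorem~\ref{thm:fgsdegrees}, exactly as the paper does; your added checks (that $\deg(P(T)-T)=\delta_p$, and that absolute indecomposability gives indecomposability over $\F_p$) are details the paper leaves implicit or only notes in passing.
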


\begin{proof}
The lower bounds are Proposition~\ref{prop:mulower}. The polynomials $u_2$
and $u_3$ give equality for $p=2$ and $p=3$ by
Theorems~\ref{thm:f2exact} and \ref{thm:oddexact}.

Let $p\geq5$, let $P$ have degree $\delta_p$, and consider $f_P$. It has
derivative one and induces $P$ on $\F_p$, so it is a mock automorphism.
Its degree is $p\delta_p$. Since $P$ is nonlinear and reduced,
$1<\delta_p<p$, and therefore $p\nmid\delta_p$. In particular,
$p^2\nmid p\delta_p$. By Lemma~\ref{lem:decomp}, $f_P$ is absolutely
indecomposable. If it were exceptional,
Theorem~\ref{thm:fgsdegrees} would force its degree to be prime or a power
of $p$. The integer $p\delta_p$ is composite and, because
$p\nmid\delta_p$, is not a power of $p$. Thus $f_P$ is nonexceptional.
\end{proof}

\begin{remark}[Where the deep input remains]
The principal existence theorem, Theorem~\ref{thm:clwfamily}, uses only the
Carlitz--Lenstra--Wan degree obstruction. Corollary~\ref{cor:classificationfree}
removes the exceptional-polynomial classification whenever
$\gcd(\delta_p,p-1)>1$, and Proposition~\ref{prop:cubiccurve} removes it for
the infinite family $p\equiv2\pmod3$. The classification theorem is retained
only for the residual cases needed to prove $\mu_p=p\delta_p$ uniformly for
all $p\geq5$. Eliminating it there would require a direct absolutely
irreducible collision component for every lift \eqref{eq:liftP}, or another
general obstruction to exceptionality.
\end{remark}

We next record exactly the low-degree inputs used in the concrete
evaluations. Dickson's classification of permutation polynomials of degree at
most five is reproduced in the published \emph{Handbook of Finite Fields}
\cite[Section~8.1, Remark~8.1.8]{MullenWang2013}; see also
\cite[Introduction]{ShallueWanless2013} for a modern confirmation of the
scope of Dickson's list. For odd prime fields, the relevant normalized
consequences are as follows: cubics are represented by $T^3$ outside
characteristic three; quartics occur
only over $\F_7$ (apart from characteristic-two families); and the degree-five
list contains $T^5$ together with additional nonmonomial families. In
particular, when $p\equiv\pm2\pmod5$ one such family is
\[
 T^5+aT^3+5^{-1}a^2T,
\]
where $a\in\F_p$ is arbitrary and $5^{-1}$ denotes the inverse of $5$ in
$\F_p$. Inspection of the complete list shows that none of its degree-five
families occurs for $p>7$ with $p\equiv1\pmod5$.

\begin{lemma}[Low-degree permutation-polynomial consequences]
\label{lem:lowdegreepp}
Let $p$ be an odd prime.
\begin{enumerate}
\item No polynomial of degree two permutes $\F_p$.
\item If $p\geq5$, there exists a permutation polynomial of degree three
over $\F_p$ if and only if $p\not\equiv1\pmod3$.
\item If $p>7$, there is no permutation polynomial of degree four over
$\F_p$.
\item If $p>7$ and $p\equiv1\pmod5$, there is no permutation polynomial of
degree five over $\F_p$.
\end{enumerate}
Moreover, $T^4+3T$ permutes $\F_7$.
\end{lemma}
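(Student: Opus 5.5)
The four items and the closing remark are of different kinds, so I would treat them separately. Items (1) through (4) are read off from Dickson's classification of permutation polynomials of degree at most five, in the normalized form quoted just before the lemma. The statement about $T^4+3T$ over $\F_7$ is a direct check. Throughout, I would first reduce to normalized forms. Composing with affine maps on both sides preserves both degree and the permutation property, by the argument of Proposition~\ref{prop:invariance} restricted to $\F_p$. Since $p$ does not divide the degree $d\le 5<p$ (for $p\ge 7$), or is handled separately for $p=5$, one may assume the polynomial is monic, has zero constant term, and has vanishing $T^{d-1}$ coefficient after the substitution $T\mapsto T-a_{d-1}/(d\,a_d)$.

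For (1), I would give the elementary argument rather than cite. After normalization a quadratic is $T^2+c$, and $T^2$ takes the value $1$ at both $\pm1$, which are distinct because $p$ is odd. For (2), a normalized cubic is $T^3+bT$. If $p\equiv 2\pmod 3$, then $T^3$ permutes $\F_p$ since $\gcd(3,p-1)=1$. If $p\equiv1\pmod3$, I would invoke Hermite's criterion, or Theorem~\ref{thm:clw} combined with Proposition~\ref{prop:effective}. The more robust route is Dickson's list, which states that the only normalized permutation cubics in characteristic $\ne3$ are $T^3$ with $\gcd(3,p-1)=1$. Either way, $p\equiv1\pmod 3$ excludes all of them. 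As a sanity check, for $p\equiv1\pmod3$ the cube map is $3$-to-$1$ on $\F_p^\times$, and any $T^3+bT$ with $b\ne0$ is excluded by the Hermite power-sum computation for exponent $(p-1)/3$.

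For (3) and (4), I would cite the normalized Dickson table in \cite[Remark~8.1.8]{MullenWang2013}. Its degree-four entries in odd characteristic occur only for $q=7$, so there is nothing for $p>7$. Its degree-five entries over prime fields are $T^5$, which requires $p\not\equiv1\pmod5$; the family $T^5\pm2aT^3+a^2T$-type Dickson polynomials, which requires $p\equiv\pm2\pmod 5$; and sporadic forms only for $q\le 13$ or small special $q$. Every entry for $p>7$ carries a congruence condition incompatible with $p\equiv1\pmod5$. A case sporadic to $q=11$, which is $\equiv1\pmod5$, needs checking against the list's listed $q$ values. I expect this careful reading of the table's side conditions, especially the sporadic entries at $q=9,11,13$, to be the main obstacle. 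If it is doubtful, I would fall back on Hermite's criterion: the sum $\sum_{x}f(x)^{(p-1)/5\cdot k}$ for suitable $k$ must vanish.

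Finally, I would verify $T^4+3T$ on $\F_7$ by tabulating its values at $0,\dots,6$. Since $x^4\in\{0,1,2,4\}$, the values are $0,4,0+\dots$; I would compute each one explicitly and check that they form a permutation.
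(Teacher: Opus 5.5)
Your proposal is correct and follows the paper's route: (1) by completing the square, (2)--(4) read off from Dickson's normalized table in \cite[Remark~8.1.8]{MullenWang2013}, and the $\F_7$ claim by tabulation. Two details: the values of $T^4+3T$ at $0,\dots,6$ are $0,4,1,6,2,3,5$, and the relevant degree-five family is $T^5+aT^3+5^{-1}a^2T$ for $p\equiv\pm2\pmod5$, whereas $T^5-2aT^3+a^2T$ is a characteristic-five entry.

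Your Hermite fallback with exponent $(p-1)/d$ is exactly the paper's Lemma~\ref{lem:hermitedivisor}. It already proves (1), the ``only if'' direction of (2), and (4) without any table, because $d\mid p-1$ in each case; among the four items, only (3) with $p\equiv3\pmod4$ genuinely needs Dickson. By contrast, the Carlitz--Lenstra--Wan plus effective-finiteness alternative you list for (2) only excludes $p\geq81$, so it would not suffice on its own.
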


\begin{proof}[Reference and verification]
The published normalized list in
\cite[Section~8.1, Remark~8.1.8]{MullenWang2013} gives items (2)--(4).
In characteristic different from three, its cubic entry is $T^3$, which is a
permutation precisely when $\gcd(3,p-1)=1$, equivalently
$p\not\equiv1\pmod3$. The quartic entries over odd fields occur only over
$\F_7$, so no odd prime $p>7$ admits a quartic permutation polynomial.
For $p>7$, inspection of the complete degree-five list shows that none of
its entries applies when $p\equiv1\pmod5$; this proves item~(4).
Item (1) is elementary: after completing the square, a quadratic polynomial
over an odd field takes the same value at two distinct points. For the final
assertion, the values of $T^4+3T$ at $0,1,\ldots,6$ are respectively
\[
 0,4,1,6,2,3,5.
\]
\end{proof}

\begin{lemma}[A Hermite-criterion consequence]\label{lem:hermitedivisor}
Let $p$ be prime and let $1<d<p$. If $d\mid p-1$, then no polynomial of
degree $d$ permutes $\F_p$.
\end{lemma}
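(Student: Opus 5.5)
The plan is to apply Hermite's criterion, specifically its power-sum half. A function $P\colon\F_p\to\F_p$ given by a reduced polynomial of degree $d$ is a permutation only if, for every $1\le t\le p-2$, the reduction of $P^t$ modulo $T^p-T$ has degree at most $p-2$. Equivalently,
\[
 \sum_{a\in\F_p}P(a)^t=0 \qquad (1\le t\le p-2),
\]
since $\sum_{a}a^j$ equals $-1$ when $(p-1)\mid j$ with $j>0$, and equals $0$ otherwise (including $j=0$). I will prove this half directly rather than cite it. If $P$ permutes $\F_p$, then
\[
 \sum_a P(a)^t=\sum_b b^t=0 \qquad\text{for } 1\le t\le p-2.
\]

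Next, choose the exponent $t=(p-1)/d$. This is an integer because $d\mid p-1$, and it satisfies $1\le t<p-1$ because $d>1$, so it lies in the allowed range. Write $P=c_dT^d+\cdots$ with $c_d\ne0$. Then $P^t$ has degree exactly $td=p-1$ with leading coefficient $c_d^t\neq0$. Since $\deg P^t=p-1<p$, the polynomial $P^t$ is already reduced, so no reduction modulo $T^p-T$ is needed.

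Now compute the power sum:
\[
 \sum_{a\in\F_p}P(a)^t=\sum_{j=0}^{p-1}[T^j]P^t\sum_a a^j=-c_d^t\neq0,
\]
because among $0\le j\le p-1$ only $j=p-1$ contributes, and it contributes $-1$ times its coefficient. This contradicts the vanishing forced by the permutation property, so no polynomial of degree $d$ permutes $\F_p$.

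There is no real obstacle here. The only care needed is to confirm that the case $j=0$ contributes $p\cdot[T^0]P^t=0$, and that $t$ lies in $[1,p-2]$. Both follow from $1<d<p$ and $d\mid p-1$. If the statement is meant for a polynomial of degree $d$ that is not reduced, the hypothesis $d<p$ already makes it reduced, so the argument covers that case too.
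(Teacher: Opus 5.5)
Your proposal is correct and follows the paper's proof: both take $t=(p-1)/d$ and observe that $P^t$ is already reduced of degree exactly $p-1$, which contradicts Hermite's criterion. The only difference is that you prove the needed half of Hermite's criterion directly from the power sums $\sum_{a\in\F_p}a^j$, whereas the paper cites it from \cite[Lemma~2.1]{Fan2020}.
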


\begin{proof}
Hermite's criterion states that, for a permutation polynomial $f$ of degree
less than $p$, the reduction of $f^k$ modulo $T^p-T$ has degree at most
$p-2$ for every $1\leq k\leq p-2$; see
\cite[Lemma~2.1]{Fan2020}. Take $k=(p-1)/d$. The leading term of $f^k$ has
nonzero coefficient and degree $p-1$, while every other term has smaller
degree. Since $p-1<p$, reduction modulo $T^p-T$ does not alter this term,
contradicting Hermite's criterion.
\end{proof}

\begin{corollary}[Evaluations of $\mu_p$]\label{cor:muclasses}
For primes $p\geq5$:
\begin{enumerate}
\item if $p\equiv2\pmod3$, then $\delta_p=3$ and $\mu_p=3p$;
\item $\delta_7=4$ and $\mu_7=28$;
\item if $p>7$, $p\equiv1\pmod3$, and $p\not\equiv1\pmod5$, then
$\delta_p=5$ and $\mu_p=5p$;
\item if $p\equiv1\pmod{15}$ but $p\not\equiv1\pmod7$, then
$\delta_p=7$ and $\mu_p=7p$;
\item if $p\equiv1\pmod{105}$, then $\delta_p\geq9$ and
$\mu_p\geq9p$.
\end{enumerate}
\end{corollary}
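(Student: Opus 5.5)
Each item reduces to determining $\delta_p$, or bounding it from below, after which $\mu_p$ comes from Theorem~\ref{thm:muexact}: either $\mu_p=p\delta_p$, or, for the lower bound in item~(5), Proposition~\ref{prop:mulower}. So the work is to find the least nonlinear degree $d<p$ admitting a permutation polynomial over $\F_p$. The tools are Lemma~\ref{lem:lowdegreepp}, which handles degrees $2$ through $5$, and Lemma~\ref{lem:hermitedivisor}, which rules out every degree $d$ with $d\mid p-1$. Existence of a permutation polynomial in a given degree will come either from explicit polynomials or from monomials $T^d$ with $\gcd(d,p-1)=1$.

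Item~(1) is Corollary~\ref{cor:classificationfree}(1). For item~(2), degree $2$ is excluded by Lemma~\ref{lem:lowdegreepp}(1), and degree $3$ by Lemma~\ref{lem:hermitedivisor}, since $3\mid 6$. Degree $4$ is realized by $T^4+3T$, so $\delta_7=4$ and $\mu_7=28$.

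For item~(3), $p\equiv1\pmod3$ rules out degree $3$ by Lemma~\ref{lem:lowdegreepp}(2). Degree $4$ is excluded by Lemma~\ref{lem:lowdegreepp}(3), since $p>7$, and degree $2$ by part~(1). Since $p\not\equiv1\pmod5$, we have $\gcd(5,p-1)=1$, so $T^5$ permutes $\F_p$, and $5<p$ because $p\geq 13$. Hence $\delta_p=5$.

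For item~(4), $p\equiv1\pmod{15}$ gives $p\equiv1\pmod3$ and $p\equiv1\pmod5$, and $p\geq31>7$. Degrees $2$ through $5$ are therefore excluded by Lemma~\ref{lem:lowdegreepp}. Degree $6$ is excluded by Lemma~\ref{lem:hermitedivisor}, since $p-1$ is even and divisible by $3$, hence by $6$. Since $p\not\equiv1\pmod7$, the monomial $T^7$ permutes $\F_p$, so $\delta_p=7$.

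For item~(5), $p\equiv1\pmod{105}$ gives the exclusions of item~(4) for degrees $2$ through $6$. It also excludes degree $7$ by Lemma~\ref{lem:hermitedivisor}, since $7\mid p-1$. Degree $8$ needs a separate argument, and this is the main obstacle. The condition $8\mid p-1$ is not implied by $105\mid p-1$; we only know $2\mid p-1$. The plan is to handle degree $8$ by Hermite's criterion with $k=(p-1)/2$, looking at the degree-$4(p-1)$ term, or alternatively via Theorem~\ref{thm:clw}-type reasoning. I expect neither route to work cleanly.

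The cleanest option, if the paper allows it, is Dickson's classification as quoted through the Handbook, which lists normalized permutation polynomials up to degree $7$ and, in the literature, degree $8$. It would be used together with the general fact that $\gcd(d,p-1)>1$ prevents degree-$d$ permutation polynomials when $p$ is large relative to $d$; this is the von zur Gathen/Carlitz--Wan bound applied over the prime field, valid for $p>d^4$. For $d=8$ that requires $p>4096$, and the finitely many primes $p\equiv1\pmod{105}$ below this bound would be checked directly. With all degrees $2$ through $8$ excluded, $\delta_p\geq9$, and Proposition~\ref{prop:mulower} gives $\mu_p\geq9p$.
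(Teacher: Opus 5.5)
Items (1)--(4), and the exclusion of degrees $2$ through $7$ in item (5), are correct and match the paper. Using Lemma~\ref{lem:hermitedivisor} for degree $3$ at $p=7$ instead of Lemma~\ref{lem:lowdegreepp}(2) is a harmless substitution. The gap is the exclusion of degree $8$ in item (5).

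Combining von zur Gathen's bound (Proposition~\ref{prop:effective}) with Theorem~\ref{thm:clw} is valid, but only for $p\geq 8^4=4096$. In that range a degree-$8$ permutation polynomial would be exceptional, and $\gcd(8,p-1)\geq2$ forbids this. The remaining primes are those $p\equiv1\pmod{210}$ below $4096$, namely $211,421,631,\dots,3571$. For these you only say they ``would be checked directly,'' and you give no method. This is not a routine verification:
\begin{enumerate}
\item Even after normalizing away the affine symmetries, roughly $p^5$ candidate octics remain, more than $10^{17}$ for $p=3571$.
\item As you note, Hermite's criterion with $k=(p-1)/2$ gives no clean leading-term obstruction. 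The exponents $p-1$, $2(p-1)$, $3(p-1)$, $4(p-1)$ of $f^{(p-1)/2}$ all collapse onto $T^{p-1}$ after reduction modulo $T^p-T$.
\item The Dickson list quoted through the Handbook covers only degrees at most five, not degree $7$ as you state, so it does not help either.
\end{enumerate}

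The missing ingredient is the degree-eight classification itself, which is what the paper uses. By Fan's theorem \cite[Theorem~1.1]{Fan2020}, for odd $q>8$ a permutation polynomial of degree $8$ over $\F_q$ exists only when $q\in\{11,13,19,23,27,29,31\}$. Since $p\equiv1\pmod{105}$ forces $p\geq211$, degree $8$ is excluded for every such $p$ at once. No asymptotic bound and no computation are needed. Replacing your last paragraph by this citation gives $\delta_p\geq9$. Then $\mu_p\geq9p$ follows from Proposition~\ref{prop:mulower}, or from Theorem~\ref{thm:muexact}, as you intended.
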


\begin{proof}
By Lemma~\ref{lem:lowdegreepp}(1)--(2), if $p\equiv2\pmod3$ then
$\delta_p=3$, proving (1). For $p=7$, the same lemma excludes degrees two
and three and exhibits the degree-four permutation $T^4+3T$, proving (2).

For (3), Lemma~\ref{lem:lowdegreepp} excludes degrees two, three, and four,
while $T^5$ is a permutation because $\gcd(5,p-1)=1$. If
$p\equiv1\pmod{15}$, then the oddness of $p$ implies
$p\equiv1\pmod{30}$, so $6\mid p-1$. Lemma~\ref{lem:lowdegreepp}
excludes degrees at most five, and Lemma~\ref{lem:hermitedivisor} excludes
degree six. If
$p\not\equiv1\pmod7$, then $T^7$ is a permutation, proving (4).

Finally, if $p\equiv1\pmod{105}$, the same arguments exclude degrees at
most seven. Such a prime is greater than $31$. Fan's classification states
that, for odd $q>8$, degree-eight permutation polynomials exist exactly for
$q\in\{11,13,19,23,27,29,31\}$
\cite[Theorem~1.1]{Fan2020}; hence degree eight is also impossible. This
proves (5). The assertions for $\mu_p$ follow from
Theorem~\ref{thm:muexact}.
\end{proof}

\begin{remark}[The residual least-degree problem]
Corollary~\ref{cor:muclasses} resolves several infinite congruence families,
but it does not provide a closed formula for $\delta_p$. For primes outside
the displayed cases, determining $\delta_p$ remains a low-degree
permutation-polynomial classification problem. Further degree-by-degree
classifications may yield additional cases, but they do not presently give
a uniform formula for all primes.
\end{remark}

\section{Mechanisms, repairs, and open questions}
\label{sec:repairs}

\subsection{Why the counterexamples work}

The counterexamples combine three independent facts. First, in
characteristic $p$,
\[
 d(H^{p^e})=0,
\]
so a nonlinear perturbation can be invisible to the Jacobian matrix. Second,
a factor such as $T^q-T$ makes the perturbation vanish on the base field.
Third, after extending the field, Frobenius surjectivity and the newly
available elements can be used to create a collision. In the multivariate
family, the free coordinate $X_2$ performs this tuning. In the odd
one-variable family, the norm-one subgroup and the invertible linearized
operator $y^q+(z-2)y$ replace that free coordinate.

All nontrivial counterexamples constructed in Sections~\ref{sec:frob} and
\ref{sec:onevariable} are étale, so inseparability is not the issue. Their
geometric generic degrees are divisible by $p$. In the multivariate family,
nonproperness is visible in the jump from the generic fiber cardinality to
the one-point fiber above $Y_2=0$. These features are consistent with the
broader theory of basic endomorphisms in
\cite{BorisovGabberVasiu2025}. What is special to the present paper is the
exact finite-extension behavior.

The collision-scheme formulation \eqref{eq:collisioncriterion} also
clarifies the relation with exceptional covers. For a finite cover, the
components of the off-diagonal fiber product are controlled by monodromy and
constant-field extensions. Our multivariate examples are generally
nonfinite, so that framework does not directly constrain their spectra. The
one-variable examples are finite and therefore show that nonfiniteness is
not the only obstruction.

\subsection{Finiteness does not repair the conjecture}

\begin{proposition}\label{prop:finitefails}
The finite-morphism version of Maubach--Willems Conjecture~2.7 is false.
Indeed, every one-variable counterexample in Section~\ref{sec:onevariable}
is a finite étale morphism $\A^1\to\A^1$.
\end{proposition}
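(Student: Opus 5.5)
The plan is to check two things: that every one-variable counterexample of Section~\ref{sec:onevariable} defines a finite étale morphism $\A^1\to\A^1$, and that each is a mock automorphism with finite spectrum. Together these contradict the finite-morphism version of the conjecture. The examples are $u_q$ for odd $q$, $u_2$, and $v_q$ for $q>2$; for odd $q$ we have $v_q=u_q$. We may also add the prime-field lifts $f_P$ and $c_p$ of Section~\ref{sec:minimal}, which have the same shape.

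For finiteness, let $f\in\F_q[T]$ be nonconstant of degree $d$. The ring map $\F_q[Y]\to\F_q[T]$, $Y\mapsto f(T)$, makes $\F_q[T]$ a module over $\F_q[Y]$. This module is generated by $1,T,\ldots,T^{d-1}$: after normalizing the leading coefficient, $T$ satisfies the monic equation $T^d+\cdots-c^{-1}Y=0$ of degree $d$. Hence the morphism is finite. For étaleness, each example has derivative $f'=1$, so $\Jac(f)=1$ is invertible everywhere and the morphism is étale, exactly as in the proof of Proposition~\ref{prop:genericdegree}. Each example also fixes $\F_q$ pointwise, or in the prime-field case induces a permutation $P$ of $\F_p$, so it is a mock automorphism.

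It then remains to recall the spectra. Theorems~\ref{thm:oddexact} and~\ref{thm:f2exact} give $\Specper_q(u_q)=\{1\}$ and $\Specper_2(u_2)=\{1\}$. Theorem~\ref{thm:clwfamily} shows that $v_q$ is nonexceptional, so its spectrum is finite. Therefore, over every finite field there is a finite étale mock automorphism whose spectrum is finite, and the finite-morphism version fails over every $\F_q$.

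No real obstacle is expected. The only point needing care is the phrase "every counterexample": the finiteness argument uses only that the map is a nonconstant polynomial in one variable, so it applies uniformly to every example, regardless of whether its spectrum is known exactly.
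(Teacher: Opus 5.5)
Your proposal is correct and follows the paper's own argument: every nonconstant one-variable polynomial $f$ makes $\F_q[T]$ a finite module over $\F_q[f]$, which gives finiteness; $f'=1$ gives étaleness; and finite spectra (exactly $\{1\}$ in the odd and binary families) come from Theorems~\ref{thm:oddexact}, \ref{thm:f2exact}, and~\ref{thm:clwfamily}. One small caution on your optional aside: $c_p$ induces $T\mapsto T^3$ on $\F_p$, so it is a mock automorphism, and hence a counterexample, only when $p\equiv2\pmod3$.
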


\begin{proof}
Every nonconstant polynomial map of the affine line is finite. The derivative
of each displayed polynomial is one, so each morphism is étale. Its
extension spectrum is finite, and in the odd and binary exact families it is
$\{1\}$.
\end{proof}

\subsection{A cofinite spectrum forces automorphy}

The following elementary criterion is included to delimit a repair that is
strong enough.

\begin{proposition}[Cofinite spectrum criterion]\label{prop:cofinite}
Let $F\in\F_q[X_1,\ldots,X_n]^n$.
\begin{enumerate}
\item If $\Specper_q(F)$ is cofinite in $\N_{>0}$, then
$\Specper_q(F)=\N_{>0}$.
\item If, in addition, $F$ is Keller, then the following are equivalent:
\begin{enumerate}
\item $F$ is a polynomial automorphism over $\F_q$;
\item $\Specper_q(F)=\N_{>0}$;
\item $\Specper_q(F)$ is cofinite.
\end{enumerate}
\end{enumerate}
\end{proposition}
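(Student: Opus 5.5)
Part (1) follows from divisor-closedness (Proposition~\ref{prop:divisorclosed}). Suppose $\Specper_q(F)$ contains every $m\geq N$, and let $m_0\geq1$ be arbitrary. Then $m_0N\geq N$ lies in the spectrum. Since $m_0\mid m_0N$, Proposition~\ref{prop:divisorclosed} gives $m_0\in\Specper_q(F)$. Hence $\Specper_q(F)=\N_{>0}$.

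For part (2), the implication (a)$\Rightarrow$(b) holds because a polynomial automorphism over $\F_q$ has a polynomial inverse defined over $\F_q$, and so it is bijective on $\F_{q^m}^n$ for every $m$. The implication (b)$\Rightarrow$(c) is trivial, and (c)$\Rightarrow$(b) is part (1). The substantive implication is therefore (b)$\Rightarrow$(a).

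I would prove (b)$\Rightarrow$(a) by passing from bijectivity on all finite subfields to bijectivity on $\overline{\F}_q$-points. Every point of $\overline{\F}_q^n$, and every pair of points, lies in some $\F_{q^m}^n$. Injectivity on every $\F_{q^m}^n$ therefore gives injectivity on $\overline{\F}_q^n$. For surjectivity, take $y\in\overline{\F}_q^n$ and choose $m$ with $y\in\F_{q^m}^n$; bijectivity on $\F_{q^m}^n$ gives a preimage. So $F$ induces a bijection $\overline{\F}_q^n\to\overline{\F}_q^n$.

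Now invoke the Ax--Grothendieck circle of results in its strong form: an injective polynomial endomorphism of $\A^n$ over an algebraically closed field is bijective, and a bijective morphism that is moreover étale (the Keller condition) is an isomorphism. The cleanest route is through Zariski's Main Theorem. An injective étale morphism $\A^n\to\A^n$ is an open immersion, since it is étale and universally injective, which amounts to being radicial on geometric points. A surjective open immersion is an isomorphism. The inverse is then a polynomial map, defined over $\overline{\F}_q$. By Galois descent, since $F$ is defined over $\F_q$ and its inverse is unique, the inverse is fixed by $\operatorname{Gal}(\overline{\F}_q/\F_q)$ and so has coefficients in $\F_q$.

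The main obstacle is the step "injective étale $\Rightarrow$ open immersion." I would cite the standard fact that an étale morphism that is universally injective is an open immersion (Stacks Project, e.g. \cite[Tag~025G]{StacksProject}). I would verify universal injectivity by noting that the morphism is injective on $\overline{\F}_q$-points and that the residue field extensions are trivial at closed points; for an étale morphism of finite type over a field, injectivity on geometric points suffices.
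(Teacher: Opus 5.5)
Your proof is correct, and part (1) and the easy implications in (2) coincide with the paper's. The difference is in how (b)$\Rightarrow$(a) produces an open immersion. The paper uses injectivity only generically. Over a dense open subset of the target the base-changed map is finite \'etale, and its reduced one-point geometric fibers force geometric generic degree one. Zariski's Main Theorem, for a birational quasi-finite separated morphism into the \emph{normal} variety $\A^n_{\overline{\F}_q}$, then gives an open immersion. You instead use injectivity at every point, together with the theorem that an \'etale, universally injective morphism is an open immersion. This bypasses generic degrees and birationality and needs no normality of the target, but it requires full universal injectivity.

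Your justification of universal injectivity is too thin as written. Universal injectivity is equivalent to surjectivity of the diagonal $\A^n\to\A^n\times_F\A^n$, i.e.\ to $\operatorname{Col}(F)=\varnothing$ as a scheme. That is a condition on non-closed points as well, so trivial residue fields at closed points do not settle it. The missing standard input is that $\operatorname{Col}(F)$ is open in a scheme of finite type over a field, so if nonempty it has a closed point. Working directly over $\F_q$, such a point has residue field some $\F_{q^m}$ and gives a collision in $\F_{q^m}^n$, contradicting $m\in\Specper_q(F)$ by \eqref{eq:collisioncriterion}.

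The same device is needed for ``a surjective open immersion is an isomorphism,'' since you only have surjectivity on points with values in finite fields (or in $\overline{\F}_q$). A nonempty closed complement of the open image would contain a closed point, and surjectivity on $\F_{q^m}^n$ shows that every closed point is hit; the paper makes this step explicit. Carried out over $\F_q$ in this way, your route even makes the base change and Galois descent unnecessary. Finally, the appeal to Ax--Grothendieck is superfluous, since you had already established bijectivity on $\overline{\F}_q^n$ directly.
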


\begin{proof}
Suppose every integer at least $M$ lies in the spectrum. For any $d\geq1$,
choose a multiple $kd\geq M$. By divisor-closure,
$kd\in\Specper_q(F)$ implies $d\in\Specper_q(F)$, which proves (1).

A polynomial automorphism is bijective after every scalar extension. For the
converse, assume that $F$ is Keller and bijective on every finite extension,
and base-change to $\overline{\F}_q$. Since
\[
 \overline{\F}_q=\bigcup_{m\geq1}\F_{q^m},
\]
the base-changed morphism is bijective on $\overline{\F}_q$-points. The
Keller condition makes it étale, hence quasi-finite and generically finite.
Over a nonempty open subset of the target it is finite étale of degree equal
to its geometric generic degree. Those geometric fibers are reduced, and
each contains exactly one point; hence the generic degree is one and the
morphism is birational.

By Zariski's Main Theorem, a birational quasi-finite separated morphism to
the normal variety $\A^n_{\overline{\F}_q}$ is an open immersion
\cite[Tag~02LQ]{StacksProject}. Its open
image contains every $\overline{\F}_q$-point. A nonempty closed complement
would contain a closed point, which over the algebraically closed ground
field is an $\overline{\F}_q$-point, a contradiction. Thus the base-changed
morphism is an isomorphism. Uniqueness of the inverse makes it invariant
under $\operatorname{Gal}(\overline{\F}_q/\F_q)$, so the inverse descends to
$\F_q$.
\end{proof}

\begin{remark}
The Keller assumption in Proposition~\ref{prop:cofinite}(2) is essential:
$T\mapsto T^p$ permutes every finite extension of $\F_p$ but is not a
polynomial automorphism of the affine line.
\end{remark}

\subsection{The prime-to-characteristic question}

For a dominant Keller map $F$, write
\[
 d_F=[\overline{\F}_q(X_1,\ldots,X_n):
       \overline{\F}_q(F_1,\ldots,F_n)]
\]
for its geometric generic degree. Every counterexample in this paper has
$p\mid d_F$.

\begin{question}[Prime-to-$p$ mock automorphisms]\label{question:primetop}
If $F$ is a mock automorphism over $\F_q$ and $p\nmid d_F$, must $F$ be a
polynomial automorphism?
\end{question}

In one variable the answer is yes. A polynomial with nonzero constant
derivative has the form
\[
 a+bT+H(T^p),
\]
so every nonlinear such polynomial has degree divisible by $p$; because the
derivative is nonzero, its geometric generic degree equals its polynomial
degree.

\subsection{Open problems}

The preceding results suggest the following concrete problems.

\begin{question}\label{question:even}
For even $q>2$, what is the exact spectrum of $v_q$ in \eqref{eq:vq}? Does
there exist a one-variable mock automorphism over every such $\F_q$ with
spectrum exactly $\{1\}$ and degree $p(q+1)$?
\end{question}

\begin{question}\label{question:delta}
Can $\delta_p$, and hence $\mu_p$, be expressed uniformly for primes in the
residual congruence classes such as $p\equiv1\pmod{105}$?
\end{question}

\begin{question}\label{question:globalmulti}
What is the least degree of a non-extension-exceptional mock automorphism of
$\F_q^n$ without the strong normalization $\Jac(F)=I_n$ and
$F|_{\F_q^n}=\id$?
\end{question}

\begin{question}\label{question:elementaryminimum}
Can the formula $\mu_p=p\delta_p$ for $p\geq5$ be proved without the degree
classification of indecomposable exceptional polynomials, for example by a
uniform collision-curve construction for every lift $f_P$ in
\eqref{eq:liftP}?
\end{question}

These questions record several concrete limitations of the present methods
and natural directions for extending the results.

\section{Conclusion}

Maubach--Willems Conjecture~2.7 fails in every positive characteristic and
over every finite field. In the explicit family $\Psi_{g,e,n}$, the spectra
of mock automorphisms are exactly $\N_{>0}$ and the finite nonempty
divisor-closed subsets of $\N_{>0}$. This gives a complete combinatorial
classification inside a sparse family of basic endomorphisms, rather than a
single isolated counterexample.

For every $n\geq2$, the least possible degree of a finite-spectrum map in
the strongly normalized class $\mathcal N_{q,n}$ is $p(q+1)$, and the bound
is attained by a spectrum-$\{1\}$ map. In dimension one, the same bound is
attained for odd $q$ and for $q=2$; the remaining even fields are open. The
prime-field minimum formula is classification-dependent outside the direct
families isolated in Section~\ref{sec:minimal}. Finiteness of the morphism
does not restore the conjecture, whereas a cofinite spectrum characterizes
polynomial automorphisms among Keller maps. The prime-to-$p$ generic-degree
condition is a natural surviving boundary for the constructions considered
here and remains unresolved in higher dimension.

\section*{Declaration of generative AI and AI-assisted technologies
in the manuscript preparation process}

During the preparation of this work, the authors used ChatGPT
(GPT-5.6 Sol) for exploratory mathematical brainstorming, manuscript
organization and language editing, LaTeX assistance, literature-search
support, and limited computational verification. The authors reviewed,
corrected, and independently verified all AI-assisted material, and take full responsibility for the content of
the published article.

\bibliographystyle{amsplain}
\bibliography{prescribed_extension_spectra_mock_automorphisms}

\end{document}